\newtheorem{theorem}{Theorem}[section]
\newtheorem{lemma}[theorem]{Lemma}
\newtheorem{proposition}[theorem]{Proposition}
\newtheorem{corollary}[theorem]{Corollary}
\theoremstyle{definition}
\newtheorem{definition}[theorem]{Definition}
\newtheorem{example}[theorem]{Example}
\newtheorem{remark}[theorem]{Remark}
\numberwithin{equation}{section}
\newcommand{\blankbox}[2]
\begin{document}
\title{Extending structures for associative conformal algebras}
\author{Yanyong Hong}
\address{College of Science, Zhejiang Agriculture and Forestry University,
Hangzhou, 311300, P.R.China}
\email{hongyanyong2008@yahoo.com}

\subjclass[2010]{16D70, 16S32, 16S99, 16W20}
\keywords{Associative conformal algebra, Extending structure, Bicrossed product, Unified product}
\thanks{Project supported by the Zhejiang Provincial Natural Science Foundation of China (No. LQ16A010011), the National Natural Science Foundation of China (No. 11501515 and No. 11626216), and the Scientific Research Foundation of Zhejiang Agriculture and Forestry University (No. 2013FR081)}
\begin{abstract}
In this paper, we give a study of the $\mathbb{C}[\partial]$-split extending structures problem for associative conformal algebras. Using the unified product as a tool, which includes interesting products such as bicrossed product, cocycle semi-direct product and so on, a cohomological type object is constructed to characterize the $\mathbb{C}[\partial]$-split extending structures for associative conformal algebras. Moreover, using this theory, the extending structures of an associative conformal algebra $A$ which is free as a $\mathbb{C}[\partial]$-module by
the $\mathbb{C}[\partial]$-module $Q=\mathbb{C}[\partial]x$ are described using flag datums of $A$. Furthermore, we give a classification of the extending structures of $A$ by $Q=\mathbb{C}[\partial]x$ in detail up to equivalence when $A$ is a free associative conformal algebra of rank 1.

\end{abstract}

\maketitle

\section{Introduction}
Conformal algebras  were introduced in \cite{K1}, \cite{K2} as a useful tool to study vertex algebras.
The structure of a (Lie) conformal algebra gives an
axiomatic description of the operator product expansion (or rather
its Fourier transform) of chiral fields in conformal field theory.
Structure theory and representation theory of Lie and associative conformal algebras are studied in a series of papers (see \cite{BKV}-\cite{DK1}, \cite{Ko1}-\cite{Z2}). In particular, associative conformal algebras naturally appear in the representation theory of Lie conformal algebras. Moreover, conformal algebras have close connections to infinite-dimensional algebras satisfying the locality property (\cite{K}).

In this paper, following the work on the extending structures problem of Lie conformal algebras (see \cite{HS}), we study the version of associative conformal algebras.\\
{\bf The $\mathbb{C}[\partial]$-split extending structures problem:}~~~\emph{Let $A$ be an associative conformal algebra and $Q$ be a $\mathbb{C}[\partial]$-module. Set $E=A\oplus Q$ where the direct sum is the sum of $\mathbb{C}[\partial]$-modules. Describe and classify all associative conformal algebra structures on $E$ up to isomorphism such that $A$ is a subalgebra of $E$.}\\
Similar problems for some classical algebra objects such as groups, associative algebras, Hopf algebras, Lie algebras, Leibniz algebras and left-symmetric algebras have been studied in \cite{AM1, AM3, AM2, AM4, AM6, H1} respectively.
Note that this problem is very difficult. When $A=0$, it is equivalent to classifying all associative conformal algebras of arbitrary ranks. As we know, it is very hard to classify those torsion-free associative conformal algebras whose rank is larger than 1. Thus, in this paper, we always assume that $A\neq 0$.
This problem includes many important problems in the structure theory of associative conformal algebra. For example,
when $Q$ is an associative conformal algebra, the problem that how to describe and classify all associative conformal algebra structures on $E$ such that $A$ and $Q$ are two subalgebras of $E$ up to isomorphism  is a special case of the $\mathbb{C}[\partial]$-split extending structures problem. Moreover, it also includes the following problem:\\
{\bf The $\mathbb{C}[\partial]$-split extension problem:}~~~\emph{Given two associative conformal algebras $A$ and $Q$. Describe and classify
all $\mathbb{C}[\partial]$-split exact sequences of associative conformal algebras as follows up to equivalence:
\begin{eqnarray}\label{en1}
\xymatrix@C=0.5cm{
  0 \ar[r] & A\ar[rr]^{i} && E\ar[rr]^{\pi} && Q\ar[r] & 0 }.
\end{eqnarray}}
Of course, the $\mathbb{C}[\partial]$-split central extension problem of an associative conformal algebra $Q$ by a trivial associative conformal algebra $A$ also belongs to this problem. It is known from \cite{BKV} that all such $\mathbb{C}[\partial]$-split central extensions up to equivalence
can be characterized by the second cohomology group $H^2(Q,A)$. Therefore, the study of the $\mathbb{C}[\partial]$-split extending structures problem is meaningful and can be used to investigate the structure of associative conformal algebras. In this paper, we hope to construct some cohomological type object to describe and classify the $\mathbb{C}[\partial]$-split extending structures of an associative conformal algebra and give a detailed study of structure theory of associative conformal algebras by using this cohomological type object. It will be found that this theory can provide an efficient way to classify some associative conformal algebras of low ranks.

This paper is organized as follows. In Section 2, we recall some preliminaries about associative conformal algebras, including the definitions of associative conformal algebra and its module. In Section 3, we introduce the unified product $A\natural Q$ associated with an extending datum $\Omega(A, Q)=(\leftharpoonup_\lambda, \rightharpoonup_\lambda, \lhd_\lambda, \rhd_\lambda,  f_\lambda, \circ_\lambda )$ where $A$ is an associative conformal algebra and $Q$ is a $\mathbb{C}[\partial]$-module.  It is shown that any associative conformal algebra $E$ satisfying the condition in the $\mathbb{C}[\partial]$-split extending structures problem is isomorphic to a unified product of $A$ and $Q$. Later on, a cohomological type object $\mathcal{AH}_A^2(Q,A)$
is constructed to give a theoretical answer to the $\mathbb{C}[\partial]$-split extending structures problem for associative conformal algebras by describing the isomorphisms between two different unified products of $A$ and $Q$ which
stabilizes $A$. It should be pointed out that the unified product of associative conformal algebra is more complex than that of Lie conformal algebra. It requires two more actions, since it does not have skew-symmetry. In Section 4, we introduce some special cases of unified products such as cocycle semi-direct product, crossed product, bicrossed product and so on. Moreover, the unified product of commutative conformal algebra is also presented. In Section 5, using the general theory, the extending structures of an associative conformal algebra $A$ which is free as a $\mathbb{C}[\partial]$-module by
the $\mathbb{C}[\partial]$-module $Q=\mathbb{C}[\partial]x$ are described using flag datums of $A$. In addition,  a classification of the extending structures of $A$ by $Q=\mathbb{C}[\partial]x$ in detail up to equivalence is given when $A$ is a free associative conformal algebra of rank 1.

Throughout this paper, denote by $\mathbb{C}$ the field of complex
numbers; $\mathbb{N}$ is the set of natural numbers, i.e.,
$\mathbb{N}=\{0, 1, 2,\cdots\}$; $\mathbb{Z}$ is the set of integer
numbers. All tensors over $\mathbb{C}$ are denoted by $\otimes$.
Moreover, if $A$ is a vector space, the space of polynomials of $\lambda$ with coefficients in $A$ is denoted by $A[\lambda]$.

\section{Preliminaries}
In this section, we recall some definitions and results about associative conformal algebras. These facts can be referred to \cite{K1}.
\begin{definition}
An \emph{associative conformal algebra} $A$ is a $\mathbb{C}[\partial]$-module with a $\lambda$-product $\cdot_\lambda \cdot$ which defines a $\mathbb{C}$-bilinear
map from $A\times A\rightarrow A[\lambda]$ satisfying
\begin{eqnarray*}
&&(\partial a)_\lambda b=-\lambda a_\lambda b,~~~a_\lambda \partial b=(\lambda+\partial)a_\lambda b, ~~\text{(conformal sesquilinearity)}\\
&&a_\lambda(b_\mu c)=(a_\lambda b)_{\lambda+\mu} c,~~\text{(associativity)}
\end{eqnarray*}
for $a$, $b$, $c\in A$.

An associative conformal algebra $A$ is called \emph{commutative} if for all $a$, $b\in A$,
\begin{eqnarray*}
a_\lambda b=b_{-\lambda-\partial}a.
\end{eqnarray*}
\end{definition}

An associative conformal algebra is called \emph{finite} if it is finitely generated as a $\mathbb{C}[\partial]$-module. The \emph{rank} of an associative conformal
algebra $A$ is its rank as of a $\mathbb{C}[\partial]$-module.
The notions of a homomorphism, ideal and subalgebra of an associative conformal
algebra are defined as usual.

\begin{example}

Let $(A,\circ)$ be an associative algebra. The current associative conformal
algebra associated to $A$ is defined by
$$\text{Cur} A=\mathbb{C}[\partial]\otimes A, ~~a_\lambda b=a\circ b,
~~a,b\in A.$$
\end{example}

Let $A$ be an associative conformal algebra, $Q$ a $\mathbb{C}[\partial]$-module and $E=A\oplus Q$ where the sum is the direct sum of $\mathbb{C}[\partial]$-modules. For a $\mathbb{C}[\partial]$-module homomorphism $\varphi: E\rightarrow E$, we consider the following
diagram
$$\xymatrix{ {A}\ar[d]^{Id}\ar[r]^{i}& {E}\ar[d]^{\varphi}\ar[r]^{\pi} & {Q}\ar[d]^{Id} \\
{A}\ar[r]^{i}&{E}\ar[r]^{\pi} &{Q} },$$
where $\pi: E\rightarrow Q$ is the natural projection of $E=A\oplus Q$ onto $Q$ and
$i: A\rightarrow E$ is the inclusion map. We say that
$\varphi: E\rightarrow E$ \emph{stabilizes} $A$ (resp. \emph{co-stabilizes} $Q$) if the left square
(resp. the right square) of the  above diagram is commutative.

Let $\cdot_\lambda$ and $\circ_\lambda$ be two associative conformal algebra structures on $E$ both containing $A$ as an associative conformal subalgebra.
If there exists an associative conformal algebra isomorphism $\varphi: (E, \cdot_\lambda )\rightarrow (E,\circ_\lambda)$ which stabilizes
$A$,  then $\cdot_\lambda$ and $\circ_\lambda$ are called \emph{equivalent}, which is denoted by
$(E, \cdot_\lambda )\equiv (E,\circ_\lambda)$.

If there exists an associative conformal algebra isomorphism $\varphi: (E, \cdot_\lambda)\rightarrow (E,\circ_\lambda)$ which stabilizes
$A$ and co-stabilizes $Q$,   $\cdot_\lambda$ and $\circ_\lambda$ are called \emph{cohomologous}, which is denoted by
$(E,\cdot_\lambda)\approx (E,\circ_\lambda)$.

Obviously, $\equiv $ and $\approx $ are equivalence relations on the set of all associative conformal algebra structures on $E$ containing $A$ as an associative conformal subalgebra. Denote  the set
of all equivalence classes via $\equiv $ (resp. $\approx $) by $\text{CExtd}(E,A)$ (resp. $\text{CExtd}^{'}(E,A)$). It is easy to see that $\text{CExtd}(E,A)$ is the classifying object of the
$\mathbb{C}[\partial]$-split extending structures problem and $\text{CExtd}^{'}(E,A)$ provides a classification of the $\mathbb{C}[\partial]$-split
 extending structures  from the point of the view of the extension problem. Moreover, there exists a canonical projection $\text{CExtd}^{'}(E,A)\twoheadrightarrow \text{CExtd}(E,A) $. Therefore, for studying the
$\mathbb{C}[\partial]$-split extending structures problem, we only need to study  $\text{CExtd}(E,A)$ and $\text{CExtd}^{'}(E,A)$.

For characterizing $\text{CExtd}(E,A)$ and $\text{CExtd}^{'}(E,A)$, we need to introduce the definitions of modules over associative conformal algebras.
\begin{definition}
A \emph{left module} $M$ over an associative conformal algebra $A$ is a $\mathbb{C}[\partial]$-module endowed with a $\mathbb{C}$-bilinear map
$A\times M\longrightarrow M[\lambda]$, $(a, v)\mapsto a\rightharpoonup_\lambda v$, satisfying the following axioms $(a, b\in A, v\in M)$:\\
(LM1)$\qquad\qquad (\partial a)\rightharpoonup_\lambda v=-\lambda a\rightharpoonup_\lambda v,~~~a\rightharpoonup_\lambda(\partial v)=(\partial+\lambda)a\rightharpoonup_\lambda v,$\\
(LM2)$\qquad\qquad (a_\lambda b)\rightharpoonup_{\lambda+\mu}v=a\rightharpoonup_\lambda(b\rightharpoonup_\mu v).$\\
We also denote it by $(M,\rightharpoonup_\lambda)$.

A \emph{right module} $M$ over an associative conformal algebra $A$ is a $\mathbb{C}[\partial]$-module endowed with a $\mathbb{C}$-bilinear map
$M\times A\longrightarrow M[\lambda]$, $(v, a)\mapsto v\lhd_\lambda a$, satisfying the following axioms $(a, b\in A, v\in M)$:\\
(RM1)$\qquad\qquad (\partial v)\lhd_\lambda a=-\lambda v\lhd_\lambda a,~~~v\lhd_\lambda(\partial a)=(\partial+\lambda)v\lhd_\lambda a,$\\
(RM2)$\qquad\qquad (v\lhd_\lambda a)_{\lambda+\mu}b=v\lhd_\lambda(a_\mu b).$\\
Usually, we denote it by $(M,\lhd_\lambda)$.

An $A$-\emph{bimodule} is $(M,\rightharpoonup_\lambda,\triangleleft_\lambda)$ such that $(M,\rightharpoonup_\lambda)$ is a left $A$-module, $(M,\lhd_\lambda)$ is a right $A$-module, and
they satisfy the following condition
\begin{eqnarray}
(a\rightharpoonup_\lambda v)\lhd_{\lambda+\mu}b=a\rightharpoonup_\lambda(v\lhd_\mu b),
\end{eqnarray}
where $a$, $b\in A$ and $v\in M$.
\end{definition}

\begin{definition}
Let $U$ and $V$ be two $\mathbb{C}[\partial]$-modules. A \emph{left conformal linear map} from $U$ to $V$ is a $\mathbb{C}$-linear map $a: U\rightarrow V[\lambda]$, denoted by $a_\lambda: U\rightarrow V[\lambda]$, such that $a_\lambda(\partial u)=-\lambda a_\lambda u$. Similarly, a \emph{right conformal linear map} from $U$ to $V$ is a $\mathbb{C}$-linear map $a: U\rightarrow V[\lambda]$, denoted by $a_\lambda: U\rightarrow V[\lambda]$, such that $a_\lambda(\partial u)=(\partial+\lambda)a_\lambda u$. A right conformal linear map is usually called as conformal linear map in short.

Moreover, let $W$ also be a $\mathbb{C}[\partial]$-module. A \emph{conformal bilinear map} from $U\times V$ to $W$ is a $\mathbb{C}$-bilinear map $f: U\times V \rightarrow W[\lambda]$, denoted by $f_\lambda: U\times V\rightarrow W[\lambda]$, such that $f_\lambda(\partial u,v)=-\lambda f_\lambda(u,v)$ and $f_\lambda( u,\partial v)= (\partial+\lambda)f_\lambda(u,v)$.
We say a conformal bilinear map from $U\times U$ to $W$ is \emph{symmetric}, if for all $u_1$, $u_2\in U$,
\begin{eqnarray}
f_\lambda(u_1,u_2)=f_{-\lambda-\partial}(u_2,u_1).
\end{eqnarray}

In addition, denote $\text{Cend}(V)$ by the set of all conformal linear maps from $V$ to $V[\lambda]$.
It has a canonical structure of a $\mathbb{C}[\partial]$-module:
$$(\partial a)_\lambda =-\lambda a_\lambda.$$
When $V$ is a finite module, $\text{Cend}(V)$ has a canonical structure of an associative conformal algebra defined by
\begin{eqnarray}
(a_\lambda b)_\mu v=a_\lambda (b_{\mu-\lambda} v),~~~~\text{ $a$, $b\in \text{Cend}(V)$, $v\in V$.}\end{eqnarray}
Denote $\text{Cend}(\mathbb{C}[\partial]^N)$ by $\text{Cend}_N$.

\end{definition}

\begin{proposition}\label{ppp2}
Let $A=\mathbb{C}[\partial]e$ be an associative conformal algebra which is free
and of rank 1 as a $\mathbb{C}[\partial]$-module.
Then $A$ is either trivial or isomorphic to $\text{Cur}B$ where $B=\mathbb{C}e$ with
$e\circ e=e$.
\end{proposition}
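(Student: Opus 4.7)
The plan is to translate the structure of $A$ into a single polynomial $p(\partial,\lambda)$ via $e_\lambda e = p(\partial,\lambda)e$, then show that associativity together with conformal sesquilinearity forces $p$ to be a constant. Since $A=\mathbb{C}[\partial]e$ is free of rank $1$, such a $p\in\mathbb{C}[\partial,\lambda]$ exists and is unique, and the whole $\lambda$-product is determined by $p$ by $\mathbb{C}[\partial]$-bilinearity. If we can show $p\in\mathbb{C}$, the two cases $p=0$ and $p\neq 0$ will produce the two algebras in the statement (after an obvious rescaling in the nonzero case).

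First I would unpack the associativity axiom $e_\lambda(e_\mu e)=(e_\lambda e)_{\lambda+\mu}e$ in terms of $p$. Using the sesquilinearity identities $(\partial a)_\lambda b=-\lambda\, a_\lambda b$ and $a_\lambda(\partial b)=(\partial+\lambda)a_\lambda b$, one gets the substitution rules $e_\lambda(q(\partial)e)=q(\partial+\lambda)(e_\lambda e)$ and $(q(\partial)e)_\lambda e=q(-\lambda)(e_\lambda e)$ for any $q\in\mathbb{C}[\partial]$. Applying these to $q(\partial)=p(\partial,\mu)$ and $q(\partial)=p(\partial,\lambda)$ respectively converts associativity into the single polynomial identity
\begin{equation*}
p(\partial+\lambda,\mu)\,p(\partial,\lambda)\;=\;p(-(\lambda+\mu),\lambda)\,p(\partial,\lambda+\mu)
\end{equation*}
in $\mathbb{C}[\partial,\lambda,\mu]$.

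The crux is a degree comparison in the variable $\partial$. Write $n=\deg_\partial p$ and let $p_n(\lambda)\in\mathbb{C}[\lambda]$ be the leading coefficient; assume $p\neq 0$, so $p_n\neq 0$. The left-hand side is a product of two polynomials of $\partial$-degree $n$ with leading coefficient $p_n(\mu)p_n(\lambda)\neq 0$, hence has $\partial$-degree exactly $2n$. The right-hand side is a polynomial in $\lambda,\mu$ alone times a polynomial of $\partial$-degree $n$, so its $\partial$-degree is at most $n$. The only way to reconcile these is $n=0$, i.e.\ $p=p(\lambda)$ depends only on $\lambda$. The identity then collapses to $p(\lambda)p(\mu)=p(\lambda)p(\lambda+\mu)$, and cancelling the nonzero polynomial $p(\lambda)$ gives $p(\mu)=p(\lambda+\mu)$, forcing $p$ to be a nonzero constant $c$.

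Conclusion: either $p\equiv 0$, in which case the $\lambda$-product on $A$ is identically zero and $A$ is trivial; or $e_\lambda e=ce$ with $c\in\mathbb{C}^\times$, in which case $e':=c^{-1}e$ generates $A$ freely over $\mathbb{C}[\partial]$ and satisfies $e'_\lambda e'=e'$, so the $\mathbb{C}[\partial]$-linear map sending $e'$ to the generator of $B=\mathbb{C}e$ with $e\circ e=e$ is an isomorphism $A\xrightarrow{\sim}\mathrm{Cur}\,B$. I expect no substantive obstacle; the only step requiring care is the degree count, which relies on the fact that the left-hand side of the associativity identity has $\partial$ appearing in both factors (so the $\partial$-degrees add), while on the right-hand side $\partial$ appears in only one factor after the sesquilinearity substitutions.
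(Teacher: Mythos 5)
Your proof is correct, but it takes a genuinely different route from the paper's. The paper argues structurally: it observes that such an algebra acts on itself either trivially or faithfully; in the trivial case $A$ is trivial, and in the faithful case $A$ embeds as a subalgebra of $\text{Cend}_1$, whereupon Theorem 2.1 of Boyallian--Kac--Liberati (the classification of subalgebras of $\text{Cend}_N$, reference [BKL]) identifies $A$ with $\text{Cur}B$. You instead encode the whole product in a single polynomial via $e_\lambda e=p(\partial,\lambda)e$ and extract everything from the associativity identity $p(\partial+\lambda,\mu)\,p(\partial,\lambda)=p(-\lambda-\mu,\lambda)\,p(\partial,\lambda+\mu)$: the comparison of $\partial$-degrees ($2n$ on the left versus at most $n$ on the right) forces $n=0$, and the residual shift-invariance $p(\mu)=p(\lambda+\mu)$ forces $p$ to be constant. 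Your degree count is airtight --- the leading coefficient $p_n(\mu)p_n(\lambda)$ is nonzero because $\mathbb{C}[\lambda,\mu]$ is an integral domain, and if $p\neq 0$ the right-hand side cannot vanish identically --- and the rescaling $e'=c^{-1}e$ correctly yields $e'_\lambda e'=e'$ and hence the isomorphism with $\text{Cur}B$. What your approach buys is a short, elementary, self-contained computation that avoids any appeal to the classification of subalgebras of $\text{Cend}_1$; what the paper's approach buys is brevity and a method that does not hinge on the product being governed by a single structure polynomial, which is special to rank $1$.
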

\begin{proof}
It is easy to see that such an algebra $A$ acts on itself either trivially or faithfully.
Obviously, in the first case, $A$ is trivial. In the second case, $A\subset \text{Cend}_1$ and
by Theorem 2.1 in \cite{BKL}, $A$ isomorphic to $\text{Cur}B$.
\end{proof}

\section{Unified products for associative conformal algebras}
In this section, we will introduce the unified products for associative conformal algebras and use this product to provide a theoretical answer to the
$\mathbb{C}[\partial]$-split extending structures problem.

\begin{definition}
Let $(A,\cdot_\lambda \cdot)$ be an associative conformal algebra  and $Q$ a $\mathbb{C}[\partial]$-module. An \emph{extending datum} of $A$ by $Q$ is
a system $\Omega(A, Q)=(\leftharpoonup_\lambda, \rightharpoonup_\lambda, \lhd_\lambda, \rhd_\lambda, $$ f_\lambda, $ $\circ_\lambda )$ consisting six conformal bilinear maps given by
\begin{eqnarray*}
\leftharpoonup_\lambda: A\times Q\rightarrow A[\lambda],~~~\rightharpoonup_\lambda:
A\times Q\rightarrow Q[\lambda],\\
\lhd_\lambda: Q\times A\rightarrow Q[\lambda],~~~\rhd_\lambda: Q\times A\rightarrow A[\lambda],\\
f_\lambda: Q\times Q \rightarrow A[\lambda],~~~\circ_\lambda: Q\times Q \rightarrow Q[\lambda].
\end{eqnarray*}
Let $\Omega(A, Q)=(\leftharpoonup_\lambda, \rightharpoonup_\lambda,\lhd_\lambda, \rhd_\lambda,  f_\lambda, \circ_\lambda )$ be an extending datum. Denote by $A\natural_{\Omega(A, Q)}$ $Q$ $=A\natural Q$ the $\mathbb{C}[\partial]$-module
$A\times Q$ with the natural $\mathbb{C}[\partial]$-module action: $\partial (a, x)=(\partial a, \partial x)$ and the bilinear map $
\cdot_\lambda \cdot: (A\times Q)\times (A\times Q)\rightarrow (A\times Q)[\lambda]$ defined by
\begin{gather}
(a,x)_\lambda (b,y)\nonumber\\
\label{e3}=(a_\lambda b+a\leftharpoonup_\lambda y+x\rhd_\lambda b+f_\lambda(x,y),a\rightharpoonup_\lambda y+x\lhd_\lambda b+x\circ_\lambda y),
\end{gather}
for all $a$, $b\in A$, $x$, $y\in Q$. Since $\lhd_\lambda$, $\rhd_\lambda$, $\leftharpoonup_\lambda$, $\rightharpoonup_\lambda$, $f_\lambda$ and $\circ_\lambda$ are conformal bilinear maps, the $\lambda$-product defined by (\ref{e3}) satisfies conformal sesquilinearity. Then
$A\natural Q$ is called the \emph{unified product} of $A$ and $Q$ associated with $\Omega(A, Q)$ if it is an associative conformal algebra with the $\lambda$-product given by (\ref{e3}). In this case, the extending datum $\Omega(A, Q)$ is called an \emph{associative conformal extending structure} of $A$ by $Q$.
We denote by $\mathcal{TC}(A,Q)$ the set of all associative conformal extending structures of $A$ by $Q$.
\end{definition}

By (\ref{e3}), the following equalities hold in $A\natural Q$ for all $a$, $b\in A$, $x$, $y\in Q$:
\begin{eqnarray}
&&\label{er1}(a,0)_{\lambda}(b,0)=(a_{\lambda} b,0),~~~(a,0)_{\lambda}(0,y)=(a\leftharpoonup_\lambda y, a\rightharpoonup_\lambda y),\\
&&\label{er2}(0,x)_{\lambda}(b,0)=(x\rhd_{\lambda} b, x\lhd_{\lambda}b),~~~(0,x)_{\lambda} (0,y)=(f_{\lambda}(x,y),x\circ_{\lambda} y).
\end{eqnarray}

\begin{theorem}\label{t1}
Let $A$ be an associative conformal algebra, $Q$ be a $\mathbb{C}[\partial]$-module and $\Omega(A, Q)$ an extending datum of $A$ by $Q$.
Then $A\natural Q$ is an associative conformal algebra if and only if the following conditions are satisfied for all
$a$, $b\in A$ and $x$, $y$, $z\in Q$:
\begin{eqnarray*}
(ACE1)&&~~(a_\lambda b)\leftharpoonup_{\lambda+\mu}x=a_\lambda(b\leftharpoonup_\mu x)+a\leftharpoonup_\lambda(b\rightharpoonup_\mu x),\\
(ACE2)&&~~a\rightharpoonup_\lambda(b\rightharpoonup_\mu x)=(a_\lambda b)\rightharpoonup_{\lambda+\mu}x,\\
(ACE3)&&~~a_\lambda(x\rhd_\mu b)+a\leftharpoonup_\lambda(x\lhd_\mu b)
=(a\leftharpoonup_\lambda x)_{\lambda+\mu}b+(a\rightharpoonup_\lambda x)\rhd_{\lambda+\mu}b,\\
(ACE4)&&~~a\rightharpoonup_\lambda(x\lhd_\mu b)=(a\rightharpoonup_\lambda x)\lhd_{\lambda+\mu}b,\\
(ACE5)&&~~x\rhd_\lambda(a_\mu b)=(x\rhd_\lambda a)_{\lambda+\mu}b+(x\lhd_\lambda a)\rhd_{\lambda+\mu}b,\\
(ACE6)&&~~x\lhd_\lambda(a_\mu b)=(x\lhd_\lambda a)\lhd_{\lambda+\mu}b,\\
(ACE7)&&~~a\leftharpoonup_\lambda(x\circ_\mu y)=(a\leftharpoonup_\lambda x)\leftharpoonup_{\lambda+\mu}y+f_{\lambda+\mu}(a\rightharpoonup_\lambda x,y)-a_\lambda f_\mu(x,y),\\
(ACE8)&&~~a\rightharpoonup_\lambda(x\circ_\mu y)=(a\rightharpoonup_\lambda x)\circ_{\lambda+\mu}y+(a\leftharpoonup_\lambda x)\rightharpoonup_{\lambda+\mu} y,\\
(ACE9)&&~~x\rhd_\lambda (a\leftharpoonup_\mu y)
+f_\lambda(x,a\rightharpoonup_\mu y)=(x\rhd_\lambda a)\leftharpoonup_{\lambda+\mu}y
+f_{\lambda+\mu}(x\lhd_\lambda a,y),\\
(ACE10)&&~~x\lhd_\lambda(a\leftharpoonup_\mu y)+x\circ_\lambda(a\rightharpoonup_\mu y)
=(x\rhd_\lambda a)\rightharpoonup_{\lambda+\mu}y+(x\lhd_\lambda a)\circ_{\lambda+\mu}y,\\
(ACE11)&&~~x\rhd_\lambda(y\rhd_\mu a)
+f_\lambda(x,y\lhd_\mu a)=f_\lambda(x,y)_{\lambda+\mu}a
+(x\circ_\lambda y)\rhd_{\lambda+\mu}a,\\
(ACE12)&&~~x\lhd_\lambda(y\rhd_\mu a)+x\circ_\lambda(y\lhd_\mu a)
=(x\circ_\lambda y)\lhd_{\lambda+\mu}a,\\
(ACE13)&&~~x\rhd_\lambda f_\mu(y,z)
+f_\lambda(x,y\circ_\mu z)=f_\lambda(x,y)\leftharpoonup_{\lambda+\mu}z+f_{\lambda+\mu}(x\circ_\lambda y,z),\\
(ACE14)&&~~x\lhd_\lambda f_\mu(y,z)+x\circ_\lambda(y\circ_\mu z)
=f_\lambda(x,y)\rightharpoonup_{\lambda+\mu}z+(x\circ_\lambda y)\circ_{\lambda+\mu}z.
\end{eqnarray*}

\end{theorem}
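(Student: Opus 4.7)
The plan is to check associativity directly, cutting it down to eight generator cases. Conformal sesquilinearity of the $\lambda$-product defined by (\ref{e3}) has already been observed in the paragraph preceding the theorem, so the only thing left to verify is the associativity identity $u_\lambda(v_\mu w) = (u_\lambda v)_{\lambda+\mu} w$ for all $u,v,w \in A\oplus Q$. Since this identity is $\mathbb{C}$-trilinear in $(u,v,w)$ and every element of $A\natural Q$ is a sum of one term of the form $(a,0)$ and one of the form $(0,x)$, it is enough to verify it on the eight ``pure'' triples obtained by choosing each of $u,v,w$ independently from $A\times 0$ or $0\times Q$.

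The triple $((a,0),(b,0),(c,0))$ collapses, via (\ref{er1}), to the associativity of $A$ itself and is automatic by hypothesis. For each of the remaining seven triples, I would expand both $u_\lambda(v_\mu w)$ and $(u_\lambda v)_{\lambda+\mu} w$ using (\ref{er1}) and (\ref{er2}), and then equate $A$-components and $Q$-components of the result; this yields two identities per triple, fourteen in all. A direct calculation matches them in the following order: $((a,0),(b,0),(0,z))$ produces (ACE1) and (ACE2); $((a,0),(0,y),(c,0))$ produces (ACE3) and (ACE4); $((0,x),(b,0),(c,0))$ produces (ACE5) and (ACE6); $((a,0),(0,y),(0,z))$ produces (ACE7) and (ACE8); $((0,x),(a,0),(0,y))$ produces (ACE9) and (ACE10); $((0,x),(0,y),(a,0))$ produces (ACE11) and (ACE12); and $((0,x),(0,y),(0,z))$ produces (ACE13) and (ACE14). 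Conversely, given (ACE1)-(ACE14), the same computations show associativity on each pure triple, and $\mathbb{C}$-trilinearity extends this to arbitrary $u,v,w$.

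There is no conceptual obstacle: the argument is entirely forced, since on each pure triple one side and the other both expand uniquely into a sum of terms, some landing in $A$ and some in $Q$, and the ACE system is literally what equating these terms demands. The only real risk is clerical: keeping the spectral parameters $\lambda,\mu,\lambda+\mu$ straight, since (ACE7)-(ACE14) are genuinely $\lambda$-dependent rather than mere equalities of elements, and one must track which argument of $f$, $\leftharpoonup$, $\rhd$, etc.\ is being ``shifted'' when the outer slot of a product is itself a $\lambda$-product. In the write-up I would present one representative case in full (say the most informative one, $((0,x),(0,y),(0,z))$, which yields (ACE13)-(ACE14) and exhibits all six structure maps), and indicate that the remaining six non-trivial cases are handled identically.
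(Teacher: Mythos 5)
Your proposal is correct and follows essentially the same route as the paper: reduce associativity by trilinearity to the eight pure triples, observe that the $(A,A,A)$ triple is the associativity of $A$, and read off the two (ACE) conditions per remaining triple by separating $A$- and $Q$-components, with exactly the same assignment of triples to axiom pairs. The only cosmetic difference is that the paper writes out the $((a,0),(b,0),(0,x))$ case in full rather than the $((0,x),(0,y),(0,z))$ case you suggest.
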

\begin{proof}
Obviously, we only need to prove that associativity for (\ref{e3}) holds if and only if $(ACE1)$-$(ACE14)$ hold.

Set
$$J((a,x),(b,y),(c,z))=(a,x)_\lambda((b,y)_\mu (c,z))-((a,x)_\lambda (b,y))_{\lambda+\mu}(c,z)$$
for all $a$, $b$, $c\in A$, $x$, $y$, $z\in Q.$

It is easy to see that associativity for (\ref{e3}) is satisfied if and only if
$J((a,0),(b,0),$ $(c,0))=0$,
$J((a,0),(b,0),(0,x))=0$, $J((a,0),(0,x),(b,0))=0$, $J((0,x),(a,0),$ $(b,0))=0$,
$J((a,0),(0,x),(0,y))=0$, $J((0,x),(a,0),(0,y))=0$, $J((0,x),(0,y),$ $(a,0))$$=0$,
and $J((0,x),(0,y),(0,z))=0$
 for all $a$, $b$, $c\in A$ and all $x$, $y$, $z\in Q$.
Obviously, $J((a,0),(b,0),(c,0))=0$ if and only if  $A$ is an associative conformal algebra.

Since
\begin{eqnarray*}
&&J((a,0),(b,0),(0,x))=\\
&&=(a,0)_\lambda(b\leftharpoonup_\mu x,b\rightharpoonup_\mu x)-(a_\lambda b,0)_{\lambda+\mu}(0,x)\\
&&=(a_\lambda(b\leftharpoonup_\mu x)+a\leftharpoonup_\lambda(b\rightharpoonup_\mu x),a\rightharpoonup_\lambda(b\rightharpoonup_\mu x))\\
&&-((a_\lambda b)\leftharpoonup_{\lambda+\mu}x,(a_\lambda b)\rightharpoonup_{\lambda+\mu}x)\\
&&=0,
\end{eqnarray*}
$J((a,0),(b,0),(0,x))=0$ if and only if $(ACE1)$ and $(ACE2)$ are satisfied.

Similarly, we can get the following: $J((a,0),(0,x),(b,0))=0$ if and only if $(ACE3)$ and $(ACE4)$ are satisfied;
$J((0,x),(a,0),(b,0))=0$  if and only if $(ACE5)$ and $(ACE6)$ are satisfied;
$J((a,0),(0,x),(0,y))=0$  if and only if $(ACE7)$ and $(ACE8)$ are satisfied;
$J((0,x),(a,0),(0,y))=0$  if and only if $(ACE9)$ and $(ACE10)$ are satisfied;
$J((0,x),(0,y),(a,0))=0$  if and only if $(ACE11)$ and $(ACE12)$ are satisfied;
$J((0,x),(0,y),(0,z))=0$  if and only if $(ACE13)$ and $(ACE14)$ are satisfied.
\end{proof}
\begin{remark}\label{rr1}
$(ACE2)$, $(ACE4)$ and $(ACE6)$ just mean that $(Q,\rightharpoonup_\lambda,\lhd_\lambda)$ is an $A$-bimodule.
\end{remark}

It is easy to see that all unified products of $A$ and $Q$ satisfy the conditions in the $\mathbb{C}[\partial]$-split extending structures problem. Next, we will show that any $E$ in this problem is isomorphic to a unified product of $A$ and $Q$.

\begin{theorem}\label{t2}
Let $A$ be an associative conformal algebra, $Q$ be a $\mathbb{C}[\partial]$-module and $E=A\oplus Q$ be the direct sum of $\mathbb{C}[\partial]$-modules. Suppose that $E$ has an associative conformal algebra structure $\cdot_\lambda \cdot$  such that $A$ is a subalgebra. Then
there exists an associative conformal extending structure $\Omega(A,Q)=(\leftharpoonup_\lambda, \rightharpoonup_\lambda,\lhd_\lambda,\rhd_\lambda, f_\lambda, \circ_\lambda )$ of
$A$ by $Q$ and an isomorphism of associative conformal algebras $E\approx A\natural Q$.
\end{theorem}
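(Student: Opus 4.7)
The plan is to construct the extending datum explicitly from the algebra structure on $E$ by decomposing each mixed product using the canonical projections, and then verify that the natural identification $A \natural Q \to E$ is an isomorphism of associative conformal algebras which stabilizes $A$ and co-stabilizes $Q$.

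Since $E = A \oplus Q$ as $\mathbb{C}[\partial]$-modules, there are canonical projections $\pi_A : E \to A$ and $\pi_Q : E \to Q$. For $a, b \in A$ and $x, y \in Q$, I would define the six component maps of $\Omega(A, Q)$ as the $A$- and $Q$-components of the mixed products in $E$:
\begin{align*}
a \leftharpoonup_\lambda x &= \pi_A(a \cdot_\lambda x), & a \rightharpoonup_\lambda x &= \pi_Q(a \cdot_\lambda x), \\
x \rhd_\lambda a &= \pi_A(x \cdot_\lambda a), & x \lhd_\lambda a &= \pi_Q(x \cdot_\lambda a), \\
f_\lambda(x, y) &= \pi_A(x \cdot_\lambda y), & x \circ_\lambda y &= \pi_Q(x \cdot_\lambda y).
\end{align*}
Because $A$ is a subalgebra of $E$, the product $a \cdot_\lambda b$ already lies in $A[\lambda]$, so on $A \times A$ the operation $\cdot_\lambda$ agrees with the original associative conformal algebra structure of $A$. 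Conformal bilinearity of each of the six maps follows immediately from conformal sesquilinearity of $\cdot_\lambda$ on $E$ together with $\mathbb{C}[\partial]$-linearity of the projections.

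Next, I would consider the natural $\mathbb{C}[\partial]$-module isomorphism $\varphi : A \natural Q \to E$, $(a, x) \mapsto a + x$. By construction, the decomposition $a \cdot_\lambda x = (a \leftharpoonup_\lambda x) + (a \rightharpoonup_\lambda x)$ (and the analogous decompositions for $x \cdot_\lambda a$ and $x \cdot_\lambda y$) shows that $\varphi$ is compatible with the $\lambda$-product defined by (\ref{e3}) on $A \natural Q$, i.e., $\varphi((a, x)) \cdot_\lambda \varphi((b, y)) = \varphi((a, x)_\lambda (b, y))$. Transporting the associativity of $\cdot_\lambda$ across $\varphi$ shows $A \natural Q$ is itself an associative conformal algebra, and hence by Theorem \ref{t1} the datum $\Omega(A, Q)$ is an associative conformal extending structure of $A$ by $Q$. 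Since $\varphi$ sends $(a, 0)$ to $a$ and satisfies $\pi_Q \circ \varphi = \pi_Q$ on the $Q$-component, it stabilizes $A$ and co-stabilizes $Q$, yielding $E \approx A \natural Q$.

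There is no substantial obstacle here: the argument is essentially a tautological decomposition of the structure maps along the direct sum $A \oplus Q$. The only matters requiring care are (i) checking conformal bilinearity of all six component maps, which is a direct consequence of conformal sesquilinearity of $\cdot_\lambda$, and (ii) invoking Theorem \ref{t1} in the reverse direction so that the fourteen axioms $(ACE1)$--$(ACE14)$ are obtained for free from the associativity of $\cdot_\lambda$ on $E$ rather than checked by hand.
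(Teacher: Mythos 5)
Your proposal is correct and follows essentially the same route as the paper: both define the six structure maps as the $A$- and $Q$-components of the mixed $\lambda$-products via the canonical projection onto $A$, and both transport associativity across the tautological identification $(a,x)\mapsto a+x$ to conclude via Theorem \ref{t1} that the datum is an extending structure and that the identification stabilizes $A$ and co-stabilizes $Q$. (Incidentally, your assignment $x\rhd_\lambda a=\pi_A(x\cdot_\lambda a)$, $x\lhd_\lambda a=\pi_Q(x\cdot_\lambda a)$ matches the codomains in the paper's definition of an extending datum, whereas the paper's own proof writes these two with their codomains interchanged, evidently a typo.)
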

\begin{proof}
According to that $E=A\oplus Q$, there is a natural $\mathbb{C}[\partial]$-module homomorphism $p: E\rightarrow A$ such that
$p(a)=a$ for all $a\in A$. Then we define the following extending datum $\Omega(A,Q)=(\leftharpoonup_\lambda, \rightharpoonup_\lambda, \lhd_\lambda,\rhd_\lambda, f_\lambda, \circ_\lambda )$ of $A$ by $Q$:
\begin{eqnarray}
&&\leftharpoonup_\lambda:A \times Q\rightarrow A[\lambda],~~~~~~~a\leftharpoonup_\lambda x:=p(a_\lambda x),\\
&&\rightharpoonup_\lambda: A \times Q\rightarrow Q[\lambda],~~~~~~~a\rightharpoonup_\lambda x:=a_\lambda x-p(a_\lambda x),\\
&&\lhd_\lambda: Q\times A\rightarrow A[\lambda],~~~~~~~~x\lhd_\lambda a:=p(x_\lambda a),\\
&&\rhd_\lambda: Q\times A\rightarrow Q[\lambda],~~~~~~~~x\rhd_\lambda a:=x_\lambda a-p(x_\lambda a),\\
&&f_\lambda: Q\times Q\rightarrow A[\lambda],~~~~~~~~f_\lambda(x,y):=p(x_\lambda y),\\
&&\circ_\lambda: Q\times Q\rightarrow Q[\lambda],~~~~x\circ_\lambda y=x_\lambda y-p(x_\lambda y).
\end{eqnarray}
With the similar proof as that in Theorem 2.4 in \cite{AM2}, it is easy to show that $\Omega(A,Q)=(\leftharpoonup_\lambda, \rightharpoonup_\lambda,\lhd_\lambda,\rhd_\lambda, f_\lambda, \circ_\lambda )$ is an associative conformal extending structure and
$E\approx A\natural Q$ as associative conformal algebras.
\end{proof}

By Theorem \ref{t2}, for characterizing the $\mathbb{C}[\partial]$-split extending structures of $A$ by $Q$ up to equivalence, we only need to study the unified products of $A$ and $Q$ up to isomorphism which stabilizes $A$.

\begin{definition}\label{DD1}
Let $A$ be an associative conformal algebra and $Q$ a $\mathbb{C}[\partial]$-module. If there exists a pair of $\mathbb{C}[\partial]$-module homomorphisms $(u,v)$ where $u: Q\rightarrow A$, $v\in \text{Aut}_{\mathbb{C}[\partial]}(Q)$ such that
the associative conformal extending structure $\Omega(A,Q)=(\leftharpoonup_\lambda,\rightharpoonup_\lambda, \lhd_\lambda,\rhd_\lambda,
f_\lambda,\circ_\lambda )$ can be obtained from
another corresponding extending structure  $\Omega^{'}(A,Q)=(\leftharpoonup_\lambda^{'},\rightharpoonup_\lambda^{'}, \lhd_\lambda^{'},\rhd_\lambda^{'},
f_\lambda^{'}, \circ_\lambda^{'} )$ using
$(u,v)$ as follows:
\begin{gather}
\label{g1}a\rightharpoonup_\lambda x=v^{-1}(a\rightharpoonup_\lambda^{'}v(x)),\\
a\leftharpoonup_\lambda x=a_\lambda u(x)+a\leftharpoonup_\lambda^{'}v(x)-u(a\rightharpoonup_\lambda x),\\
x\lhd_\lambda a=v^{-1}(v(x)\lhd_\lambda^{'}a),\\
x\rhd_\lambda a=u(x)_\lambda a+v(x)\rhd_\lambda^{'}a-u(x\lhd_\lambda a),\\
x\circ_\lambda y=v^{-1}(u(x)\rightharpoonup_\lambda^{'}v(y))+v^{-1}(v(x)\lhd_\lambda^{'}u(y))+v^{-1}(v(x)\circ_\lambda^{'}v(y)),\\
\label{g6}f_\lambda(x,y)=u(x)_\lambda u(y)+u(x)\leftharpoonup_\lambda^{'}v(y)
+v(x)\rhd_\lambda^{'}u(y)+f_\lambda^{'}(v(x),v(y))-u(x\circ_\lambda y),
\end{gather}
for all $a\in A$, $x$, $y\in Q$, then $\Omega(A,Q)$ and  $\Omega^{'}(A,Q)$ are called \emph{equivalent} and  denote it
by $\Omega(A,Q)\equiv \Omega^{'}(A,Q)$.

In particular, $v=Id$, $\Omega(A,Q)$ and  $\Omega^{'}(A,Q)$ are called \emph{cohomologous} and denote it
by $\Omega(A,Q)\approx \Omega^{'}(A,Q)$.

\end{definition}

\begin{lemma}\label{l1}
Suppose that $\Omega(A,Q)=(\leftharpoonup_\lambda,\rightharpoonup_\lambda, \lhd_\lambda,\rhd_\lambda,
f_\lambda,\circ_\lambda )$ and $\Omega^{'}(A,Q)=($ $\leftharpoonup_\lambda^{'}$ $\rightharpoonup_\lambda^{'}$, $\lhd_\lambda^{'},\rhd_\lambda^{'},$ $f_\lambda^{'},$ $ \circ_\lambda^{'})$
are two associative conformal extending structures of $A$ by $Q$.
Let $A\natural Q$ and $A\natural^{'} Q$ be the corresponding unified products.
Then $A\natural Q \equiv  A\natural^{'} Q$ if and only if $\Omega(A,Q)\equiv \Omega^{'}(A,Q)$,
and $A\natural Q \approx  A\natural^{'} Q$ if and only if $\Omega(A,Q)\approx \Omega^{'}(A,Q)$.
\end{lemma}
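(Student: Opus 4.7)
The plan is to reduce both statements to the following structural observation. Any $\mathbb{C}[\partial]$-module endomorphism $\varphi$ of $E=A\oplus Q$ that stabilizes $A$ must satisfy $\varphi(a,0)=(a,0)$, hence writing $\varphi(0,x)=(u(x),v(x))$ determines $\varphi$ completely as $\varphi(a,x)=(a+u(x),v(x))$ for some $\mathbb{C}[\partial]$-module homomorphisms $u:Q\to A$ and $v:Q\to Q$. A short check shows $\varphi$ is bijective precisely when $v\in \text{Aut}_{\mathbb{C}[\partial]}(Q)$ (with explicit inverse $(a,x)\mapsto(a-u(v^{-1}(x)),v^{-1}(x))$), and that $\varphi$ additionally co-stabilizes $Q$ precisely when $v=\text{Id}_Q$. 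Thus $\mathbb{C}[\partial]$-module isomorphisms of $E$ stabilizing $A$ are in bijection with the admissible pairs $(u,v)$ of Definition \ref{DD1}, and those further co-stabilizing $Q$ correspond exactly to the cohomologous case $v=\text{Id}_Q$.

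For the forward direction $A\natural Q\equiv A\natural^{'}Q\Longrightarrow \Omega\equiv \Omega^{'}$, I would assume $\varphi:A\natural Q\to A\natural^{'}Q$ is an associative conformal algebra isomorphism stabilizing $A$ and extract the associated $(u,v)$ from the decomposition above. Evaluating the multiplicativity condition $\varphi(\xi_{\lambda}\eta)=\varphi(\xi)_{\lambda}\varphi(\eta)$ on the four basic pairs $\xi,\eta\in\{(a,0),(0,x)\}$ via (\ref{er1}) and (\ref{er2}), and separating $A$- and $Q$-components, produces six scalar identities: the case $\xi=\eta=(a,0)$ is automatic (both products reduce to $(a_\lambda b,0)$ since $A$ is a subalgebra of both unified products), while the remaining three pairs $(a,0)_\lambda(0,y)$, $(0,x)_\lambda(a,0)$, $(0,x)_\lambda(0,y)$ produce respectively the formulas for $(a\rightharpoonup_\lambda y, a\leftharpoonup_\lambda y)$, for $(x\lhd_\lambda a, x\rhd_\lambda a)$, and for $(x\circ_\lambda y, f_\lambda(x,y))$. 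Collected together these are exactly (\ref{g1})–(\ref{g6}), so $\Omega(A,Q)\equiv \Omega^{'}(A,Q)$.

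Conversely, given $(u,v)$ satisfying (\ref{g1})–(\ref{g6}), I would define $\varphi(a,x):=(a+u(x),v(x))$; the preceding paragraph already shows this is a $\mathbb{C}[\partial]$-module isomorphism stabilizing $A$, and its multiplicativity reduces, by conformal bilinearity, to checking $\varphi(\xi_\lambda\eta)=\varphi(\xi)_\lambda\varphi(\eta)$ on the four basic pairs above, each case being a direct rearrangement of the corresponding relation in (\ref{g1})–(\ref{g6}). This establishes the first equivalence, and the cohomologous statement is obtained by specializing to $v=\text{Id}_Q$ on both sides of the bijection. The main obstacle will be the careful $A$-versus-$Q$ bookkeeping in the multiplicativity verification; no deep structural input beyond the triangular form of $\varphi$ is required, and the computation parallels that of Theorem \ref{t2} and the corresponding result for associative algebras in \cite{AM2}.
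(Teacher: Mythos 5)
Your proposal is correct and follows essentially the same route as the paper: both reduce the problem to the observation that an isomorphism stabilizing $A$ has the triangular form $\varphi(a,x)=(a+u(x),v(x))$ with $v\in \text{Aut}_{\mathbb{C}[\partial]}(Q)$ (and $v=\text{Id}_Q$ in the co-stabilizing case), and then match multiplicativity on the basic pairs against (\ref{g1})--(\ref{g6}). The only difference is that the paper delegates the component-by-component verification to the analogous Lemma 2.5 of \cite{AM2}, whereas you spell it out; the content is identical.
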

\begin{proof}
Let $\varphi: A\natural Q\rightarrow A\natural^{'} Q$ be an isomorphism of associative conformal algebras which stabilizes $A$.
According to that $\varphi$ stabilizes $A$,  $\varphi(a,0)=(a,0)$. Then we can assume
$\varphi(a,x)=(a+u(x),v(x))$ where $u:Q\rightarrow A$, $v:Q\rightarrow Q$ are two linear maps.
Obviously, $\varphi$ is a $\mathbb{C}[\partial]$-module homomorphism if and only if
$u$, $v$ are two $\mathbb{C}[\partial]$-module homomorphisms.
Then, similar to that in Lemma 2.5 in \cite{AM2}, it is easy to check that
$\varphi$ is an
algebra isomorphism which stabilizes $A$ if and only if $v$ is a $\mathbb{C}[\partial]$-module isomorphism, and (\ref{g1})-(\ref{g6}) hold.
Moreover, it is obvious that an associative conformal algebra isomorphism $\varphi$ which stabilizes $A$ and costabilizes
$Q$ if and only if $v=Id_Q$.

Then, this lemma can be obtained by Definition \ref{DD1} and the above discussion.
\end{proof}

By the discussion above, the answer for the $\mathbb{C}[\partial]$-split extending structures problem of associative conformal algebra is given as follows.
\begin{theorem}\label{t4}
Let $A$ be an associative conformal algebra and $Q$ a $\mathbb{C}[\partial]$-module. Set $E=A\oplus Q$ where the direct sum is the sum of $\mathbb{C}[\partial]$-modules. Then we get\\
(1) Denote $\mathcal{AH}_{A}^2(Q,A):=\mathcal{TC}(A,Q)/\equiv$. Then the map
\begin{eqnarray}
\mathcal{AH}_{A}^2(Q,A)\rightarrow \text{CExtd}(E,A),~~~~\overline{\Omega(A,Q)}\rightarrow (A\natural Q,\cdot_\lambda \cdot)
\end{eqnarray}
is bijective, where $\overline{\Omega(A,Q)}$ is the equivalence class of $\Omega(A,Q)$ under $\equiv$.\\
(2) Denote $\mathcal{AH}^2(Q,A):=\mathcal{TC}(A,Q)/\approx$. Then the map
\begin{eqnarray}
\mathcal{AH}^2(Q,A)\rightarrow \text{CExtd}^{'}(E,A),~~~~\overline{\overline{\Omega(A,Q)}}\rightarrow (A\natural Q,\cdot_\lambda \cdot)
\end{eqnarray}
is bijective, where $\overline{\overline{\Omega(A,Q)}}$ is the equivalence class of $\Omega(A,Q)$ under $\approx$.\\

\end{theorem}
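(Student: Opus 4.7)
The plan is to deduce Theorem \ref{t4} directly from Theorem \ref{t2} and Lemma \ref{l1}; essentially no new computation is required, because these two previous results together package exactly the three ingredients (well-definedness, surjectivity, injectivity) needed for the claimed bijection. I would argue part (1) in detail and then obtain (2) by the verbatim same argument with $\equiv$ replaced by $\approx$ throughout, invoking the ``$v = \text{Id}_Q$'' case of Lemma \ref{l1} wherever Lemma \ref{l1} is used.

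For well-definedness of the map in (1), I would observe that if $\Omega(A,Q) \equiv \Omega'(A,Q)$, then the ``if'' direction of Lemma \ref{l1} gives $A\natural Q \equiv A\natural' Q$ as associative conformal algebra structures on $E = A \oplus Q$, so the assignment $\Omega(A,Q) \mapsto (A\natural Q, \cdot_\lambda)$ descends to a well-defined map out of $\mathcal{AH}^2_A(Q,A)$. For surjectivity, I would take any class in $\text{CExtd}(E,A)$ represented by an associative conformal algebra structure $\cdot_\lambda$ on $E$ containing $A$ as a subalgebra. Theorem \ref{t2} then produces an extending structure $\Omega(A,Q)$ together with an isomorphism $(E,\cdot_\lambda)\cong A\natural Q$; the one small point worth checking is that this isomorphism genuinely stabilizes $A$, which is immediate from its construction via the projection $p: E \to A$ and the canonical splitting $E = A \oplus Q$. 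Hence $\overline{\Omega(A,Q)}$ is a preimage of the given class.

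For injectivity, suppose that $\overline{\Omega(A,Q)}$ and $\overline{\Omega'(A,Q)}$ have the same image in $\text{CExtd}(E,A)$. Then the corresponding unified products $A\natural Q$ and $A\natural' Q$ are isomorphic as associative conformal algebra structures on $E$ via a map that stabilizes $A$, and the ``only if'' direction of Lemma \ref{l1} then forces $\Omega(A,Q) \equiv \Omega'(A,Q)$, so the two classes coincide in $\mathcal{AH}^2_A(Q,A)$. Part (2) is proved identically, with the co-stabilization refinement of Theorem \ref{t2} and the $v = \text{Id}_Q$ refinement of Lemma \ref{l1} playing the corresponding roles.

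The main obstacle is essentially cosmetic, since all of the substantive work is already encapsulated in Theorem \ref{t2} (surjectivity) and Lemma \ref{l1} (well-definedness and injectivity, in both $\equiv$- and $\approx$-versions). The only matter requiring real attention is the parallel book-keeping between parts (1) and (2): one must verify that the extra condition ``$\varphi$ co-stabilizes $Q$'' on the algebra isomorphism corresponds exactly to the extra condition ``$v = \text{Id}_Q$'' on the pair $(u,v)$ in Definition \ref{DD1}, which is precisely the content of the final sentence of the proof of Lemma \ref{l1}; once this correspondence is noted, (2) follows without any further argument.
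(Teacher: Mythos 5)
Your proposal is correct and follows exactly the route the paper takes: the paper's proof consists of the single sentence that the result follows directly from Theorem \ref{t1}, Theorem \ref{t2} and Lemma \ref{l1}, and your write-up simply makes explicit how those ingredients supply well-definedness, surjectivity and injectivity (with the $v=\mathrm{Id}_Q$ refinement handling part (2)).
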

\begin{proof}
It can be directly obtained from Theorem \ref{t1}, Theorem \ref{t2} and Lemma \ref{l1}.
\end{proof}
\section{Special cases of unified products}
In this section, we introduce several special cases of unified products such as cocycle semidirect product, crossed product, bicrossed product and so on. Note that by Theorem \ref{t4}, all unified products of an associative conformal algebra $A$ and a
$\mathbb{C}[\partial]$-module $Q$ up to equivalence can be classified by the cohomological type object $\mathcal{AH}_{A}^2(Q,A)$. All products introduced in this section up to equivalence can also be described by the corresponding cohomological type objects.
\subsection{Cocycle semidirect products and semidirect sum}
Let $\Omega(A,Q)$ $=(\leftharpoonup_\lambda, \rightharpoonup_\lambda, \lhd_\lambda,\rhd_\lambda,
f_\lambda,\circ_\lambda )$ be an extending datum of an associative conformal algebra $A$ by a $\mathbb{C}[\partial]$-module $Q$ where
$\leftharpoonup_\lambda$ and $\rhd_\lambda$ trivial. Then
$\Omega(A,Q)=(\rightharpoonup_\lambda, \lhd_\lambda,
f_\lambda,\circ_\lambda )$ is an associative conformal extending structure of $A$ by $Q$ if and only if
$(Q,\rightharpoonup_\lambda,\lhd_\lambda)$ is an $A$-bimodule and  $\rightharpoonup_\lambda$, $\lhd_\lambda$,
$f_\lambda$, $\circ_\lambda$ satisfy (ACE14) and
\begin{eqnarray}
&&f_{\lambda+\mu}(a\rightharpoonup_\lambda x,y)=a_\lambda f_\mu(x,y),\\
&&a\rightharpoonup_\lambda(x\circ_\mu y)=(a\rightharpoonup_\lambda x)\circ_{\lambda+\mu}y,\\
&&f_\lambda(x,a\rightharpoonup_\mu y)=f_{\lambda+\mu}(x\lhd_\lambda a,y),\\
&&x\circ_\lambda(a\rightharpoonup_\mu y)
=(x\lhd_\lambda a)\circ_{\lambda+\mu}y,\\
&&f_\lambda(x,y\lhd_\mu a)=f_\lambda(x,y)_{\lambda+\mu}a,\\
&&x\circ_\lambda(y\lhd_\mu a)
=(x\circ_\lambda y)\lhd_{\lambda+\mu}a,\\
&&f_\lambda(x,y\circ_\mu z)=f_{\lambda+\mu}(x\circ_\lambda y,z),
\end{eqnarray}
where $a\in A$ and $x$, $y$, $z\in Q$. Denote this unified product by $A\natural^f Q$, which is called the \emph{cocycle
semidirect product of associative conformal algebras}. Note that, in this case, $A\natural^f Q$ as an
$A$-bimodule is the direct sum of $A$-bimodules $(A,\cdot_\lambda,\cdot_\lambda)$ and $(Q,\rightharpoonup_\lambda,\lhd_\lambda)$.

\begin{corollary}
Let $A$  be an associative conformal algebra and $Q$ be a $\mathbb{C}[\partial]$-module.
Set $E= A\oplus Q$, where the direct sum is the sum of $\mathbb{C}[\partial]$-modules.
Assume that $E$ has an associative conformal algebra structure such that $A$ is a subalgebra of $E$ and
the inclusion $A\hookrightarrow E$ has a retraction that is an $A$-bimodule map. Then
$E$ is isomorphic to a cocycle semidirect product  $A\natural^f Q$.
\end{corollary}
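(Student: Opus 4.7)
The plan is to reduce the corollary to Theorem \ref{t2}, applied to the given retraction, and then to observe that the $A$-bimodule hypothesis forces two of the six extending maps to vanish.

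First I would let $p\colon E\to A$ denote the retraction and note that, since $p|_A=\mathrm{Id}_A$, we have a direct sum decomposition of $\mathbb{C}[\partial]$-modules $E=A\oplus \ker(p)$ with $\ker(p)\cong E/A\cong Q$. Replacing $Q$ by the isomorphic $\mathbb{C}[\partial]$-submodule $\ker(p)\subset E$, I may assume that the ambient decomposition $E=A\oplus Q$ is the one induced by $p$, so that $p$ is the projection onto the first summand and $p|_Q=0$. This substitution is harmless because the classifying object $\mathcal{AH}_A^2(Q,A)$ only depends on the $\mathbb{C}[\partial]$-module isomorphism class of $Q$.

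Next I would apply Theorem \ref{t2} with this retraction to obtain an associative conformal extending structure
$$\Omega(A,Q)=(\leftharpoonup_\lambda,\rightharpoonup_\lambda,\lhd_\lambda,\rhd_\lambda,f_\lambda,\circ_\lambda)$$
and an isomorphism $E\cong A\natural Q$. Invoking the explicit formulas from that proof together with the hypothesis that $p$ is an $A$-bimodule homomorphism and the fact that $p(x)=0$ for $x\in Q$, for every $a\in A$ and $x\in Q$ one computes
$$a\leftharpoonup_\lambda x=p(a_\lambda x)=a_\lambda p(x)=0,\qquad x\rhd_\lambda a=p(x_\lambda a)=p(x)_\lambda a=0.$$
Hence both $\leftharpoonup_\lambda$ and $\rhd_\lambda$ are trivial. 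By the description opening Section 4.1, the remaining data $(\rightharpoonup_\lambda,\lhd_\lambda,f_\lambda,\circ_\lambda)$ constitute a cocycle semidirect product datum; the specific relations required of $A\natural^f Q$ are automatic, because (ACE1)--(ACE14) of Theorem \ref{t1} already hold and collapse to those relations once $\leftharpoonup_\lambda$ and $\rhd_\lambda$ are set to zero.

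The main (indeed the only) subtlety is conceptual rather than computational: one must recognise that the a priori decomposition $E=A\oplus Q$ need not be the one induced by the retraction, so Theorem \ref{t2} has to be applied with $\ker(p)$ in place of $Q$. Once this identification is in place, the $A$-bimodule property of $p$ instantly kills the two maps that distinguish a general unified product from a cocycle semidirect product, and Theorem \ref{t2} delivers the required isomorphism $E\cong A\natural^f Q$.
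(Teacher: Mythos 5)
Your proposal is correct and follows the same route as the paper, whose proof of this corollary is simply the one-line remark that it follows directly from Theorem \ref{t2}; you have filled in exactly the details that remark leaves implicit (taking $p$ to be the given retraction, replacing $Q$ by $\ker(p)$, and using the $A$-bimodule property to kill $\leftharpoonup_\lambda$ and $\rhd_\lambda$). The observation that the ambient splitting must be replaced by the one induced by the retraction is a worthwhile point the paper glosses over, but it does not constitute a different approach.
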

\begin{proof}
It can be directly obtained from Theorem \ref{t2}.
\end{proof}

Finally, let us consider the extending datum $\Omega(A,Q)=(\leftharpoonup_\lambda,\rightharpoonup_\lambda, \lhd_\lambda,\rhd_\lambda,
f_\lambda,\circ_\lambda )$ of an associative conformal algebra $A$ by a $\mathbb{C}[\partial]$-module $Q$  with
$\leftharpoonup_\lambda$, $\rhd_\lambda$ and $f_\lambda$ trivial. Then $\Omega(A,Q)=(\rightharpoonup_\lambda, \lhd_\lambda,
\circ_\lambda )$ is an associative conformal extending structure of $A$ by $Q$ if and only if
$(Q,\rightharpoonup_\lambda,\lhd_\lambda)$ is an $A$-bimodule, $(Q,\circ_\lambda)$ is an associative conformal algebra, and
  $\rightharpoonup_\lambda$, $\lhd_\lambda$,
$\circ_\lambda$ satisfy
\begin{eqnarray}
&&a\rightharpoonup_\lambda(x\circ_\mu y)=(a\rightharpoonup_\lambda x)\circ_{\lambda+\mu}y,~~~~x\circ_\lambda(a\rightharpoonup_\mu y)
=(x\lhd_\lambda a)\circ_{\lambda+\mu}y,\\
&&x\circ_\lambda(y\lhd_\mu a)=(x\circ_\lambda y)\lhd_{\lambda+\mu}a,
\end{eqnarray}
where $a\in A$ and $x$, $y\in Q$. Denote this unified product by $A\ltimes Q$, which is called the \emph{semidirect sum}
of algebras $A$ and $Q$.
\begin{corollary}
Let $A$  be an associative conformal algebra and $Q$ be a $\mathbb{C}[\partial]$-module.
Set $E= A\oplus Q$, where the direct sum is the sum of $\mathbb{C}[\partial]$-modules.
Assume that $E$ has an associative conformal algebra structure such that $A$ is a subalgebra of $E$ and
the inclusion $A\hookrightarrow E$ has a retraction that is an algebra map. Then
$E$ is isomorphic to a semidirect sum  $A\ltimes Q$.
\end{corollary}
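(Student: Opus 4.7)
The plan is to apply Theorem \ref{t2} with a carefully chosen complement to $A$ in $E$. Let $p\colon E\to A$ be the hypothesized retraction of the inclusion that is simultaneously a homomorphism of associative conformal algebras. Since $p\circ i=\mathrm{Id}_A$, one gets a $\mathbb{C}[\partial]$-module decomposition $E=A\oplus\ker p$, and since $\ker p$ and the given summand $Q$ are both $\mathbb{C}[\partial]$-isomorphic to $E/A$, I would identify $Q$ with $\ker p$ and work with this distinguished choice of complement. Feeding this identification into Theorem \ref{t2} produces an associative conformal extending structure $\Omega(A,Q)=(\leftharpoonup_\lambda,\rightharpoonup_\lambda,\lhd_\lambda,\rhd_\lambda,f_\lambda,\circ_\lambda)$ together with an isomorphism $E\cong A\natural Q$, where each of the six structure maps is defined via the formulas in the proof of Theorem \ref{t2} in terms of $p$ applied to $\lambda$-products.

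The key observation is then that, because $p$ is an algebra map and $Q=\ker p$, the three components $\leftharpoonup_\lambda$, $\rhd_\lambda$, and $f_\lambda$ vanish identically. Indeed, for $a\in A$ and $x$, $y\in Q$ the definitions give
\[a\leftharpoonup_\lambda x=p(a_\lambda x)=p(a)_\lambda p(x)=a_\lambda 0=0,\]
\[x\rhd_\lambda a=p(x_\lambda a)=p(x)_\lambda p(a)=0_\lambda a=0,\]
\[f_\lambda(x,y)=p(x_\lambda y)=p(x)_\lambda p(y)=0_\lambda 0=0.\]
Thus the extending datum reduces precisely to the triple $(\rightharpoonup_\lambda,\lhd_\lambda,\circ_\lambda)$ singled out in the paragraph just before the corollary.

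Consequently $A\natural Q$ coincides with the semidirect sum $A\ltimes Q$, and the isomorphism supplied by Theorem \ref{t2} becomes an isomorphism $E\cong A\ltimes Q$, which is the desired conclusion. The only mild subtlety is the initial identification of $Q$ with $\ker p$; once that is in place, the vanishing of $\leftharpoonup_\lambda$, $\rhd_\lambda$, and $f_\lambda$ is immediate from the defining formulas, and no further verification is required since the surviving axioms on $(\rightharpoonup_\lambda,\lhd_\lambda,\circ_\lambda)$ are precisely those guaranteed by the semidirect-sum specialization of Theorem \ref{t1}. I do not anticipate any real obstacle here; the work is essentially bookkeeping to translate the algebra-map condition on $p$ into the triviality of three of the six extending structure maps.
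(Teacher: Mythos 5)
Your argument is correct and follows the same route the paper intends: the one-line proof ``directly from Theorem \ref{t2}'' is exactly your construction of replacing $Q$ by the complement $\ker p$ determined by the algebra-map retraction $p$, after which the defining formulas $a\leftharpoonup_\lambda x=p(a_\lambda x)$, $x\rhd_\lambda a=p(x_\lambda a)$, $f_\lambda(x,y)=p(x_\lambda y)$ vanish because $p$ is multiplicative and kills $\ker p$. The identification of $\ker p$ with $Q$ as $\mathbb{C}[\partial]$-modules is harmless since the corollary only asserts an isomorphism of associative conformal algebras, so no further justification is needed.
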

\begin{proof}
It can be directly obtained from Theorem \ref{t2}.
\end{proof}

\subsection{Crossed product and bicrossed product}
Let $\Omega(A,Q)=(\leftharpoonup_\lambda,\rightharpoonup_\lambda, \lhd_\lambda,$ $\rhd_\lambda,
f_\lambda,\circ_\lambda )$ be an extending datum of an associative conformal algebra $A$ by a $\mathbb{C}[\partial]$-module $Q$ where
$\rightharpoonup_\lambda$ and $\lhd_\lambda$ is trivial. Then $\Omega(A,Q)=(\leftharpoonup_\lambda, \rhd_\lambda, f_\lambda, \circ_\lambda )$ is an associative conformal extending structure of $A$ by $Q$ if and only if
$(Q,\circ_\lambda)$ is an associative conformal algebra and
the following compatibility conditions are satisfied for all $a$, $b\in A$, $x$, $y$, $z\in Q$:
\begin{eqnarray}
(a_\lambda b)\leftharpoonup_{\lambda+\mu}x=a_\lambda(b\leftharpoonup_\mu x),\\
a_\lambda(x\rhd_\mu b)
=(a\leftharpoonup_\lambda x)_{\lambda+\mu}b,\\
x\rhd_\lambda(a_\mu b)=(x\rhd_\lambda a)_{\lambda+\mu}b,\\
a\leftharpoonup_\lambda(x\circ_\mu y)=(a\leftharpoonup_\lambda x)\leftharpoonup_{\lambda+\mu}y-a_\lambda f_\mu(x,y),\\
x\rhd_\lambda (a\leftharpoonup_\mu y)=(x\rhd_\lambda a)\leftharpoonup_{\lambda+\mu}y,\\
x\rhd_\lambda(y\rhd_\mu a)=f_\lambda(x,y)_{\lambda+\mu}a
+(x\circ_\lambda y)\rhd_{\lambda+\mu}a,\\
x\rhd_\lambda f_\mu(y,z)
+f_\lambda(x,y\circ_\mu z)=f_\lambda(x,y)\leftharpoonup_{\lambda+\mu}z+f_{\lambda+\mu}(x\circ_\lambda y,z).
\end{eqnarray}
The associated unified product denoted by $A\natural_{\leftharpoonup,\rhd}^f Q$ is called the \emph{crossed product}
of $A$ and $Q$. Note that $A$ is an ideal of $A\natural_{\leftharpoonup,\rhd}^fQ$.

\begin{proposition}\label{p5}
Let $A$, $Q$ be two associative conformal algebras.
Set $E= A\oplus Q$, where the direct sum is the sum of $\mathbb{C}[\partial]$-modules. Assume that $E$ has an associative conformal algebra structure  such that $A$ is an ideal of $E$. Then
$E$ is isomorphic to a crossed product $A\natural_{\leftharpoonup,\rhd}^f Q$ of associative conformal algebras $A$ and $Q$.
\end{proposition}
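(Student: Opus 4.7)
The plan is to invoke Theorem \ref{t2} to realize the given structure on $E$ as a unified product, and then use the ideal hypothesis to see that the two actions which must be trivial in a crossed product do indeed vanish.

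First I would apply Theorem \ref{t2} to produce an associative conformal extending structure $\Omega(A,Q)=(\leftharpoonup_\lambda,\rightharpoonup_\lambda,\lhd_\lambda,\rhd_\lambda,f_\lambda,\circ_\lambda)$ together with an isomorphism $E\cong A\natural Q$ of associative conformal algebras. By the explicit formulas used in the proof of that theorem, $a\rightharpoonup_\lambda x$ and $x\lhd_\lambda a$ are the $Q$-components (with respect to the decomposition $E=A\oplus Q$) of $a_\lambda x$ and $x_\lambda a$ respectively, obtained via the canonical projection $p\colon E\to A$.

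Next I would use that $A$ is an ideal of $E$: for any $a\in A$ and $x\in Q\subset E$ we have $a_\lambda x\in A[\lambda]$ and $x_\lambda a\in A[\lambda]$, so their $Q$-components vanish. Thus $a\rightharpoonup_\lambda x=0$ and $x\lhd_\lambda a=0$. With these two actions trivial, the axioms (ACE1)--(ACE14) of Theorem \ref{t1} collapse to exactly the seven crossed product compatibility conditions listed immediately before this proposition; in particular (ACE14) becomes the associativity of $\circ_\lambda$, which turns $(Q,\circ_\lambda)$ into an associative conformal algebra. Therefore the unified product $A\natural Q$ is precisely a crossed product $A\natural_{\leftharpoonup,\rhd}^f Q$, and composing with the isomorphism from Theorem \ref{t2} yields $E\cong A\natural_{\leftharpoonup,\rhd}^f Q$.

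I do not anticipate a substantive obstacle here. The argument is essentially a specialization of Theorem \ref{t2}: the ideal hypothesis is designed exactly to force the two ``mixed'' actions $\rightharpoonup_\lambda$ and $\lhd_\lambda$ to be zero. The only care required is bookkeeping about the codomains of the six structure maps in the extending datum, and matching the surviving conditions with the crossed product axioms recorded above.
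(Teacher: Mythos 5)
Your proposal is correct and follows essentially the same route as the paper, whose proof is just the one-line observation that the result follows directly from Theorem \ref{t2}; you have merely filled in the details, correctly noting that the ideal hypothesis forces the two $Q$-valued mixed actions $\rightharpoonup_\lambda$ and $\lhd_\lambda$ to vanish, which is exactly the degeneration of (ACE1)--(ACE14) to the crossed product axioms.
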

\begin{proof}
It can be directly obtained by Theorem \ref{t2}.
\end{proof}

Therefore, by the discussion that in Section 3, the crossed product of $A$ and $Q$ can be used to give an answer to the $\mathbb{C}[\partial]$-split extension problem about $A$ and $Q$. A similar cohomological object as $\mathcal{AH}^2(Q,A)$
can be given to describe and characterize the $\mathbb{C}[\partial]$-split extension problem. The details can be referred to that in Section 3.

Let $\Omega(A,Q)=(\leftharpoonup_\lambda,\rightharpoonup_\lambda, \lhd_\lambda,\rhd_\lambda,
f_\lambda,\circ_\lambda )$ be an extending datum of an associative conformal algebra $A$ by a $\mathbb{C}[\partial]$-module $Q$ where
$f_\lambda$ is trivial. Then $\Omega(A,Q)=(\leftharpoonup_\lambda,\rightharpoonup_\lambda, \lhd_\lambda,\rhd_\lambda,\circ_\lambda )$ is an associative conformal extending structure of $A$ by $Q$ if and only if
$(Q,\circ_\lambda)$ is an associative conformal algebra,
$(Q,\rightharpoonup_\lambda,\lhd_\lambda)$ is an $A$-bimodule, $(A,\rhd_\lambda, \leftharpoonup_\lambda)$ is a $Q$-bimodule, and
the following compatibility conditions hold for all $a$, $b\in A$, $x$, $y\in Q$:
\begin{eqnarray}
&&(a_\lambda b)\leftharpoonup_{\lambda+\mu}x=a_\lambda(b\leftharpoonup_\mu x)+a\leftharpoonup_\lambda(b\rightharpoonup_\mu x),\\
&&a_\lambda(x\rhd_\mu b)+a\leftharpoonup_\lambda(x\lhd_\mu b)
=(a\leftharpoonup_\lambda x)_{\lambda+\mu}b+(a\rightharpoonup_\lambda x)\rhd_{\lambda+\mu}b,\\
&&x\rhd_\lambda(a_\mu b)=(x\rhd_\lambda a)_{\lambda+\mu}b+(x\lhd_\lambda a)\rhd_{\lambda+\mu}b,\\
&&a\rightharpoonup_\lambda(x\circ_\mu y)=(a\rightharpoonup_\lambda x)\circ_{\lambda+\mu}y+(a\leftharpoonup_\lambda x)\rightharpoonup_{\lambda+\mu} y,\\
&&x\lhd_\lambda(a\leftharpoonup_\mu y)+x\circ_\lambda(a\rightharpoonup_\mu y)
=(x\rhd_\lambda a)\rightharpoonup_{\lambda+\mu}y+(x\lhd_\lambda a)\circ_{\lambda+\mu}y,\\
&&x\lhd_\lambda(y\rhd_\mu a)+x\circ_\lambda(y\lhd_\mu a)
=(x\circ_\lambda y)\lhd_{\lambda+\mu}a.
\end{eqnarray}
The associated unified product is denoted by $A\bowtie Q$, which is called the \emph{bicrossed product} of $A$ and $Q$.
Note that both $A$ and $Q$ are subalgebras of $A\bowtie Q$.

\begin{proposition}\label{p4}
Let $A$, $Q$ be two associative conformal algebras.
Set $E= A\oplus Q$, where the direct sum is the sum of $\mathbb{C}[\partial]$-modules. Assume that $E$ has an associative conformal algebra structure  such that $A$ and $Q$ are two subalgebras of $E$. Then
$E$ is isomorphic to a bicrossed product $A\bowtie Q$ of associative conformal algebras $A$ and $Q$.
\end{proposition}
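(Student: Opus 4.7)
The plan is to invoke Theorem~\ref{t2} and then observe that the additional hypothesis ``$Q$ is a subalgebra of $E$'' forces the cocycle $f_\lambda$ constructed in Theorem~\ref{t2} to vanish, so the associated unified product specializes to a bicrossed product.

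More precisely, first I would apply Theorem~\ref{t2} to the associative conformal algebra structure on $E=A\oplus Q$. This yields an associative conformal extending structure $\Omega(A,Q)=(\leftharpoonup_\lambda,\rightharpoonup_\lambda,\lhd_\lambda,\rhd_\lambda,f_\lambda,\circ_\lambda)$ and an isomorphism $E\cong A\natural Q$, where all six maps are expressed via the canonical projection $p:E\to A$ along $Q$; in particular $f_\lambda(x,y)=p(x_\lambda y)$ for $x,y\in Q$.

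Next I would use the assumption that $Q$ is a subalgebra of $E$: for any $x,y\in Q$ the $\lambda$-product $x_\lambda y$ computed in $E$ lies in $Q[\lambda]\subset E[\lambda]$. Since $p$ is the projection onto $A$ along $Q$, this gives $p(x_\lambda y)=0$, i.e.\ $f_\lambda(x,y)=0$ for all $x,y\in Q$. Moreover, the same reasoning shows $x\rhd_\lambda a=x_\lambda a-p(x_\lambda a)$ and $x\circ_\lambda y=x_\lambda y$ are precisely the $Q$-components of the corresponding products in $E$, so $\circ_\lambda$ is the given algebra structure on $Q$. Thus the extending datum falls into the subclass with $f_\lambda$ trivial, which by the preceding subsection is exactly the definition of a bicrossed product.

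Finally, one must check that the remaining compatibility conditions (the seven displayed identities defining $A\bowtie Q$) are automatically satisfied; but this is nothing other than Theorem~\ref{t1} applied to an extending structure whose $f_\lambda$ vanishes, together with the fact that $(Q,\circ_\lambda)$ is an associative conformal algebra by hypothesis. Hence $E\cong A\natural Q=A\bowtie Q$. The only mildly delicate point is being careful that the construction of $(\leftharpoonup_\lambda,\rightharpoonup_\lambda,\lhd_\lambda,\rhd_\lambda,\circ_\lambda)$ via $p$ really does recover the bimodule structures $(Q,\rightharpoonup_\lambda,\lhd_\lambda)$ and $(A,\rhd_\lambda,\leftharpoonup_\lambda)$ claimed in the definition of the bicrossed product, but this is immediate once $f_\lambda=0$ is known, since then (ACE2), (ACE4), (ACE6) reduce precisely to the $A$-bimodule axioms for $Q$ and the mirror identities give the $Q$-bimodule axioms for $A$. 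The proof is essentially a one-line consequence of Theorem~\ref{t2}, so no serious obstacle arises.
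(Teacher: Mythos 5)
Your proposal is correct and follows essentially the same route as the paper, whose entire proof of Proposition~\ref{p4} is the single line ``It can be directly obtained by Theorem~\ref{t2}''; you have simply made explicit the intended argument (the projection $p$ kills $x_\lambda y$ when $Q$ is a subalgebra, so $f_\lambda=0$ and the unified product from Theorem~\ref{t2} is a bicrossed product). No gaps.
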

\begin{proof}
It can be directly obtained by Theorem \ref{t2}.
\end{proof}

\subsection{Unified product of commutative conformal algebras}
In this subsection,  the unified product of commutative conformal algebras is considered. According to (\ref{er1})
and (\ref{er2}), we can obtain that a unified product $A\natural Q$ is commutative if and only if
$A$ is commutative, $f_\lambda: Q\times Q\rightarrow A[\lambda]$, $\circ_\lambda: Q\times Q\rightarrow Q[\lambda]$ are two symmetric  conformal bilinear maps, $a\leftharpoonup_\lambda x=x\rhd_{-\lambda-\partial}a$ and
$a\rightharpoonup_\lambda x=x\lhd_{-\lambda-\partial}a$ for all $a\in A$ and $x\in Q$.
Therefore, we introduce the definition of unified product of commutative conformal algebras as follows.

\begin{definition}
Let $(A,\cdot_\lambda \cdot)$ be a commutative conformal algebra  and $Q$ a $\mathbb{C}[\partial]$-module. An \emph{extending datum} of $A$ by $Q$ is
a system $\Omega(A, Q)=(\lhd_\lambda, \rhd_\lambda,  f_\lambda, \circ_\lambda )$ consisting four conformal bilinear maps given by
\begin{eqnarray*}
\lhd_\lambda: Q\times A\rightarrow Q[\lambda],~~~\rhd_\lambda: Q\times A\rightarrow A[\lambda],\\
f_\lambda: Q\times Q \rightarrow A[\lambda],~~~\circ_\lambda: Q\times Q \rightarrow Q[\lambda].
\end{eqnarray*}
Let $\Omega(A, Q)=(\lhd_\lambda, \rhd_\lambda,  f_\lambda, \circ_\lambda )$ be an extending datum. Denote by $A\natural_{\Omega(A, Q)}Q=A\natural Q$ the $\mathbb{C}[\partial]$-module
$A\times Q$ with the natural $\mathbb{C}[\partial]$-module action: $\partial (a, x)=(\partial a, \partial x)$ and the bilinear map $
\cdot_\lambda \cdot: (A\times Q)\times (A\times Q)\rightarrow (A\times Q)[\lambda]$ defined by
\begin{gather}
(a,x)_\lambda (b,y)\nonumber\\
\label{ce3}=(a_\lambda b+y\rhd_{-\lambda-\partial} a+x\rhd_{\lambda} b+f_\lambda(x,y),y\lhd_{-\lambda-\partial} a+x\lhd_\lambda b+x\circ_\lambda y),
\end{gather}
for all $a$, $b\in A$, $x$, $y\in Q$. Obviously, since $\lhd_\lambda$, $\rhd_\lambda$, $f_\lambda$ and $\circ_\lambda$ are conformal bilinear maps, the $\lambda$-product defined by (\ref{ce3}) satisfies conformal sesquilinearity. Then
$A\natural Q$ is called the \emph{unified product} of $A$ and $Q$ associated with $\Omega(A, Q)$ if it is a commutative conformal algebra with the $\lambda$-product given by (\ref{ce3}). In this case, the extending datum $\Omega(A, Q)$ is called a \emph{commutative conformal extending structure} of $A$ by $Q$.
\end{definition}

Similarly, we can give a theorem in the commutative case as Theorem \ref{t1}.

\begin{theorem}\label{ct1}
Let $A$ be a commutative conformal algebra, $Q$ be a $\mathbb{C}[\partial]$-module and $\Omega(A, Q)$ be an extending datum of $A$ by $Q$.
Then $A\natural Q$ is a commutative conformal algebra if and only if the following conditions are satisfied for all
$a$, $b\in A$ and $x$, $y$, $z\in Q$:
\begin{eqnarray*}
(CCE1)&&~~f_\lambda(x,y)=f_{-\lambda-\partial}(y,x),~~~~~~x\circ_\lambda y=y\circ_{-\lambda-\partial}x,\\
(CCE2)&&~~a_\lambda(x\rhd_{-\mu-\partial}b)+(x\lhd_{-\mu-\partial}b)\rhd_{-\lambda-\partial}a=x\rhd_{-\lambda-\mu-\partial}(a_\lambda b),\\
(CCE3)&&~~(x\lhd_{-\mu-\partial}b)\lhd_{-\lambda-\partial}a=x\lhd_{-\lambda-\mu-\partial}(a_\lambda b),\\
(CCE4)&&~~a_\lambda f_\mu(x,y)+(x\circ_\mu y)\rhd_{-\lambda-\partial}a=y\rhd_{-\lambda-\mu-\partial}(x\rhd_{-\lambda-\partial}a)\\
&&+f_{\lambda+\mu}(x\lhd_{-\lambda-\partial}a,y),\\
(CCE5)&&~~(x\circ_\mu y)\lhd_{-\lambda-\partial}a=y\lhd_{-\lambda-\mu-\partial}(x\rhd_{-\lambda-\partial}a)
+(x\lhd_{-\lambda-\partial}a)\circ_{\lambda+\mu}y,\\
(CCE6)&&~~x\rhd_\lambda f_\mu(y,z)+f_\lambda(x,y\circ_\mu z)=z\rhd_{-\lambda-\mu-\partial} f_\lambda(x,y)
+f_{\lambda+\mu}(x\circ_\lambda y,z),\\
(CCE7)&&~~x\lhd_\lambda f_\mu(y,z)+x\circ_\lambda(y\circ_\mu z)
=z\lhd_{-\lambda-\mu-\partial}(x\circ_\lambda y)+(x\circ_\lambda y)\circ_{\lambda+\mu}z.
\end{eqnarray*}

\end{theorem}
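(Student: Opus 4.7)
The plan is to reduce the statement to Theorem \ref{t1} combined with the commutativity requirement. Specifically, $A \natural Q$ with the product \eqref{ce3} is a commutative conformal algebra if and only if (i) it is an associative conformal algebra in the sense of Theorem \ref{t1}, and (ii) the commutativity relation $(a,x)_\lambda (b,y) = (b,y)_{-\lambda-\partial} (a,x)$ holds on $A \natural Q$. Thus the proof splits cleanly into extracting the commutativity conditions and then invoking (the appropriate shadow of) Theorem \ref{t1}.

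First I would extract the commutativity conditions by comparing $(a,x)_\lambda(b,y)$ with $(b,y)_{-\lambda-\partial}(a,x)$ block by block, using the analogues of \eqref{er1}--\eqref{er2}. The $A \times A$ block recovers the commutativity of $A$, which is hypothesized. The mixed $A\times Q$ and $Q\times A$ blocks are automatically compatible with the four-map formula \eqref{ce3}: indeed, \eqref{ce3} was built precisely so that the identifications $a \leftharpoonup_\lambda y = y \rhd_{-\lambda-\partial} a$ and $a \rightharpoonup_\lambda y = y \lhd_{-\lambda-\partial} a$, which the mixed blocks would force on a six-map extending datum, are already encoded. Finally, the $Q \times Q$ block yields exactly the two symmetry identities of (CCE1).

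Next I would impose associativity via Theorem \ref{t1}. Conditions (ACE1)--(ACE14), after substituting $a \leftharpoonup_\lambda y = y \rhd_{-\lambda-\partial} a$ and $a \rightharpoonup_\lambda y = y \lhd_{-\lambda-\partial} a$ (and using (CCE1) for the $Q$-valued symmetric maps), become relations purely among $\lhd_\lambda, \rhd_\lambda, f_\lambda, \circ_\lambda$. The commutativity of $A\natural Q$ makes the Jacobiator $J((u_1,v_1),(u_2,v_2),(u_3,v_3))$ equivalent to a shift of $J((u_3,v_3),(u_2,v_2),(u_1,v_1))$, so the eight Jacobiator types from the proof of Theorem \ref{t1} collapse to essentially half that many. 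Concretely, (ACE1) and (ACE2) transcribe directly into (CCE2) and (CCE3); the conditions from the mixed Jacobiators $J((a,0),(0,x),(b,0))$, $J((0,x),(a,0),(b,0))$, $J((a,0),(0,x),(0,y))$, $J((0,x),(a,0),(0,y))$, $J((0,x),(0,y),(a,0))$ pairwise collapse (via the substitution $\lambda \leftrightarrow -\lambda-\partial$) into (CCE4) and (CCE5); and (ACE13), (ACE14) transcribe into (CCE6), (CCE7).

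The main obstacle is the systematic bookkeeping of the sesquilinear shifts $\lambda \mapsto -\lambda - \partial$ required to recognize which ACE conditions are genuinely equivalent under commutativity. This is essentially the same mechanical verification that underpins Theorem \ref{t1}; no new conceptual ingredient is needed beyond the repeated use of (CCE1), conformal sesquilinearity, and the substitution relations identified in the first step.
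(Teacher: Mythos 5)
Your proposal takes essentially the same route as the paper: the paper's entire proof of Theorem~\ref{ct1} is the one-line remark that it is ``similar to that in Theorem~\ref{t1}'', i.e.\ a direct Jacobiator-by-Jacobiator verification after imposing commutativity, and the paper records the identifications $a\leftharpoonup_\lambda x=x\rhd_{-\lambda-\partial}a$, $a\rightharpoonup_\lambda x=x\lhd_{-\lambda-\partial}a$ and the symmetry of $f_\lambda,\circ_\lambda$ in the paragraph preceding the commutative definition, exactly as you do. One bookkeeping slip in your grouping: $J((a,0),(0,x),(b,0))$ and $J((0,x),(a,0),(b,0))$ involve two $A$-entries and one $Q$-entry, so under the substitution $\lambda\leftrightarrow-\lambda-\partial$ they collapse (together with (ACE1)--(ACE2), i.e.\ (ACE3)--(ACE6)) into (CCE2)--(CCE3); only the three Jacobiators with one $A$-entry and two $Q$-entries, namely (ACE7)--(ACE12), produce (CCE4)--(CCE5).
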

\begin{proof}
The proof is similar to that in Theorem \ref{t1}.
\end{proof}

\section{Unified products when $Q=\mathbb{C}[\partial]x$}

Set $A$ be an associative conformal algebra, $Q$ be a $\mathbb{C}[\partial]$-module and $E$ be that in the $\mathbb{C}[\partial]$-split extending structures problem. When $Q$ is a torsion $\mathbb{C}[\partial]$-module, due to the fact that the torsion element must be a central element, $Q$ is a two-sided ideal contained in the center of $E$. Therefore, in this case, $E\cong A\oplus Q$ as associative conformal algebras, and $Q$ is a trivial two-sided ideal of $E$.

Similarly, if $A$ is a torsion $\mathbb{C}[\partial]$-module, the associative conformal algebra structure on $E$ is just the central extension of $Q$ by $A$.

Next, we will use the general theory developed in Section 3 to study the case when $A$ is a free $\mathbb{C}[\partial]$-module and $Q$ is a free $\mathbb{C}[\partial]$-module of rank 1. Set $Q=\mathbb{C}[\partial]x$.
\begin{definition}
Let $A=\mathbb{C}[\partial]V$ be an associative conformal algebra which is free as a $\mathbb{C}[\partial]$-module. A \emph{flag datum} of $A$ is
a 6-tuple $(h_\lambda(\cdot,\partial),$ $ D_\lambda,$ $ g_\lambda(\cdot,\partial),$
$T_\lambda, $ $Q_0(\lambda,\partial), $ $P(\lambda,\partial))$ where $P(\lambda,\partial)\in \mathbb{C}[\lambda,\partial]$,
$Q_0(\lambda,\partial)\in A[\lambda]$, $h_\lambda(\cdot,\partial):A\rightarrow \mathbb{C}[\lambda,\partial]$ and $D_\lambda: A\rightarrow A[\lambda]$ are two left conformal linear maps, and $g_\lambda(\cdot,\partial): A\rightarrow \mathbb{C}[\lambda,\partial]$ and $T_\lambda: A\rightarrow A[\lambda]$ are two conformal linear maps
satisfying the following conditions $(a, b\in V)$:
\begin{gather}
\label{f1}D_{\lambda+\mu}(a_\lambda b)=a_\lambda(D_\mu(b))+h_\mu(b,\lambda+\partial)D_\lambda(a),\\
\label{f2}h_\mu(b,\lambda+\partial)h_\lambda(a,\partial)=h_{\lambda+\mu}(a_\lambda b,\partial),\\
\label{f3}a_\lambda(T_\mu(b))+g_\mu(b,\lambda+\partial)D_\lambda(a)=(D_\lambda(a))_{\lambda+\mu}b
+h_\lambda(a,-\lambda-\mu)T_{\lambda+\mu}(b),\\
\label{f4}g_\mu(b,\lambda+\partial)h_\lambda(a,\partial)=h_\lambda(a,-\lambda-\mu)g_{\lambda+\mu}(b,\partial),\\
\label{f5}T_\lambda(a_\mu b)=(T_\lambda(a))_{\lambda+\mu}b+g_\lambda(a,-\lambda-\mu)T_{\lambda+\mu}(b),\\
\label{f6}g_\lambda(a_\mu b,\partial)=g_\lambda(a,-\lambda-\mu)g_{\lambda+\mu}(b,\partial),\\
\label{f7}P(\mu,\lambda+\partial)D_\lambda(a)=D_{\lambda+\mu}(D_\lambda(a))
+h_\lambda(a,-\lambda-\mu)Q_0(\lambda+\mu,\partial)-a_\lambda(Q_0(\mu,\partial)),\\
\label{f8} P(\mu,\lambda+\partial)h_\lambda(a,\partial)=h_\lambda(a,-\lambda-\mu)P(\lambda+\mu,\partial)
+h_{\lambda+\mu}(D_\lambda(a),\partial),\\
\label{f9}T_\lambda(D_\mu(a))+h_\mu(a,\lambda+\partial)Q_0(\lambda,\partial)
=D_{\lambda+\mu}(T_\lambda(a))+g_\lambda(a,-\lambda-\mu)Q_0(\lambda+\mu,\partial),\\
\label{f10}g_\lambda(D_\mu(a),\partial)
+h_\mu(a,\lambda+\partial)P(\lambda,\partial)=h_{\lambda+\mu}(T_\lambda(a),\partial)
+g_\lambda(a,-\lambda-\mu)P(\lambda+\mu,\partial),\\
\label{f11}T_\lambda(T_\mu(a))+g_\mu(a,\lambda+\partial)Q_0(\lambda,\partial)
=Q_0(\lambda,\partial)_{\lambda+\mu}a+P(\lambda,-\lambda-\mu)T_{\lambda+\mu}(a),\\
\label{f12}g_\lambda(T_\mu(a),\partial)+g_\mu(a,\lambda+\partial)P(\lambda,\partial)=P(\lambda,-\lambda-\mu)g_{\lambda+\mu}(a,\partial),\\
\label{f13}T_\lambda(Q_0(\mu,\partial))+P(\mu,\lambda+\partial)Q_0(\lambda,\partial)
=D_{\lambda+\mu}(Q_0(\lambda,\partial))+P(\lambda,-\lambda-\mu)Q_0(\lambda+\mu,\partial),\\
\label{f14}g_\lambda(Q_0(\mu,\partial),\partial)+P(\mu,\lambda+\partial)P(\lambda,\partial)
=h_{\lambda+\mu}(Q_0(\lambda,\partial),\partial)+P(\lambda,-\lambda-\mu)P(\lambda+\mu,\partial).
\end{gather}

\end{definition}

Denote the set of all flag datums of $A$ by $\mathcal{FAC}(A)$.

\begin{proposition}\label{pr1}
Let $A=\mathbb{C}[\partial]V$ be an associative conformal algebra which is a free $\mathbb{C}[\partial]$-module and $Q=\mathbb{C}[\partial]x$ be a free $\mathbb{C}[\partial]$-module of rank 1.
Then there is a bijection between the set  $\mathcal{TC}(A,Q)$ of all associative conformal extending structures of $A$ by $Q$ and $\mathcal{FAC}(A)$.
\end{proposition}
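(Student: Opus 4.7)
The plan is to exploit the freeness of both $A=\mathbb{C}[\partial]V$ and $Q=\mathbb{C}[\partial]x$ to reduce the data of an associative conformal extending structure $\Omega(A,Q)=(\leftharpoonup_\lambda,\rightharpoonup_\lambda,\lhd_\lambda,\rhd_\lambda,f_\lambda,\circ_\lambda)$ to six ``scalar'' pieces of data on generators. First I would define, for $a\in V$,
\begin{gather*}
D_\lambda(a):=a\leftharpoonup_\lambda x,\qquad T_\lambda(a):=x\rhd_\lambda a,\\
a\rightharpoonup_\lambda x=:h_\lambda(a,\partial)\,x,\qquad x\lhd_\lambda a=:g_\lambda(a,\partial)\,x,\\
f_\lambda(x,x)=:Q_0(\lambda,\partial),\qquad x\circ_\lambda x=:P(\lambda,\partial)\,x,
\end{gather*}
and extend by $\mathbb{C}$-bilinearity and the conformal sesquilinearity axioms. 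From the conformal sesquilinearity of $\leftharpoonup_\lambda,\rightharpoonup_\lambda,\rhd_\lambda,\lhd_\lambda$ one reads off that $D_\lambda$ and $h_\lambda(\cdot,\partial)$ are left conformal linear, while $T_\lambda$ and $g_\lambda(\cdot,\partial)$ are (right) conformal linear; freeness of $Q$ ensures $Q_0(\lambda,\partial)\in A[\lambda]$ and $P(\lambda,\partial)\in\mathbb{C}[\lambda,\partial]$ are arbitrary. This produces a well-defined map $\mathcal{TC}(A,Q)\to\mathcal{FAC}(A)$.

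Next I would construct the inverse. Starting from a flag datum $(h_\lambda,D_\lambda,g_\lambda,T_\lambda,Q_0,P)$, I would recover the six conformal bilinear maps by the formulas
\begin{gather*}
a\rightharpoonup_\lambda(p(\partial)x)=p(\lambda+\partial)h_\lambda(a,\partial)\,x,\quad
(p(\partial)a)\leftharpoonup_\lambda(q(\partial)x)=p(-\lambda)q(\lambda+\partial)D_\lambda(a),\\
(q(\partial)x)\lhd_\lambda(p(\partial)a)=q(-\lambda)p(\lambda+\partial)g_\lambda(a,\partial)\,x,\quad
(q(\partial)x)\rhd_\lambda(p(\partial)a)=q(-\lambda)p(\lambda+\partial)T_\lambda(a),
\end{gather*}
and analogously for $f_\lambda$ and $\circ_\lambda$ on $Q\times Q$. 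The conformal linearity properties of the six scalar pieces are precisely what makes these extensions well-defined conformal bilinear maps. This shows the two assignments are mutually inverse.

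The heart of the proof is then to translate the fourteen axioms (ACE1)--(ACE14) into (\ref{f1})--(\ref{f14}) one by one, evaluated on generators $a,b\in V$ and $x,y,z\in\{x\}$. For example, (ACE1) applied with the extra argument $x\in Q$ gives $D_{\lambda+\mu}(a_\lambda b)$ on the left, while the right side produces $a_\lambda D_\mu(b)$ together with $a\leftharpoonup_\lambda(h_\mu(b,\partial)x)=h_\mu(b,\lambda+\partial)D_\lambda(a)$ after applying sesquilinearity in the $Q$-slot; this is exactly (\ref{f1}). A parallel computation handles (ACE2)$\leftrightarrow$(\ref{f2}), and each subsequent pair (ACE$k$) produces one of (\ref{f3})--(\ref{f14}) by the same pattern, where the two conditions inside each (ACE$k$) with $Q$-valued vs $A$-valued outputs split into the two adjacent flag equations. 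Because the six scalar pieces completely determine the extending structure and all fourteen axioms are multilinear over $\mathbb{C}[\partial]$, it suffices to check (ACE1)--(ACE14) on generators, and the sesquilinear extensions then propagate the identities to all of $A$ and $Q$.

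The main obstacle is purely bookkeeping: keeping track of which argument $\partial$ acts through in expressions like $a\leftharpoonup_\lambda(h_\mu(b,\partial)x)$ versus $(h_\lambda(a,\partial)x)\rhd_{\lambda+\mu}b$, so that the substitutions $\partial\mapsto\lambda+\partial$ (from sesquilinearity in the $Q$-slot on the right) and $\partial\mapsto-\lambda-\mu$ (from sesquilinearity in the $Q$-slot on the left of $\rhd,\lhd$) are applied correctly. Once this dictionary is fixed, each (ACE$k$) unwinds mechanically to the stated flag relation, and no further input is needed beyond the definitions.
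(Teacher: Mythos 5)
Your proposal is correct and follows essentially the same route as the paper: restrict the six conformal bilinear maps to the generators $a\in V$ and $x$, read off that $h_\lambda(\cdot,\partial),D_\lambda$ are left conformal linear and $g_\lambda(\cdot,\partial),T_\lambda$ are conformal linear, and translate (ACE1)--(ACE14) into (\ref{f1})--(\ref{f14}) by sesquilinear expansion. The paper compresses the axiom-by-axiom translation into ``it is easy to check,'' whereas you spell out the inverse construction and the generator reduction, but the content is identical.
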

\begin{proof}
Let $\Omega(A,Q)=(\leftharpoonup_\lambda,\rightharpoonup_\lambda,\lhd_\lambda,\rhd_\lambda,f_\lambda,
\circ_\lambda)$ be an associative conformal extending structure. According to the condition that $Q=\mathbb{C}[\partial]x$ is a free
$\mathbb{C}[\partial]$-module of rank 1, we can set
\begin{eqnarray*}
&& a\rightharpoonup_\lambda x=h_\lambda(a,\partial)x,~~a\leftharpoonup_\lambda x=D_\lambda(a),\\
&&x\lhd_\lambda a=g_\lambda(a,\partial)x,~~x\rhd_\lambda a=T_\lambda(a),\\
&&x\circ_\lambda x=P(\lambda,\partial)x,~~f_\lambda(x,x)=Q_0(\lambda,\partial),
\end{eqnarray*}
where $a\in V$, $P(\lambda,\partial)\in \mathbb{C}[\lambda,\partial]$, $Q_0(\lambda,\partial)\in A[\lambda]$ and $h_\lambda(\cdot,\partial)$, $g_\lambda(\cdot,\partial): A\rightarrow \mathbb{C}[\lambda,\partial]$ and $D_\lambda$, $T_\lambda: A\rightarrow A[\lambda]$ are four linear maps.

Since $\leftharpoonup_\lambda$, $\rightharpoonup_\lambda$, $\lhd_\lambda$ and $\rhd_\lambda$ are four conformal bilinear maps, we can get $h_\lambda(\cdot,\partial)$ and $D_\lambda$ are left conformal linear maps, and $g_\lambda(\cdot,\partial)$, and $T_\lambda$
are two conformal linear maps. Moreover, it is easy to check that
$(ACE1)$-$(ACE14)$ hold if and only if $(\ref{f1})$-$(\ref{f14})$ are satisfied.
\end{proof}

By Proposition \ref{pr1}, the associative conformal algebra corresponding
to the flag datum $(h_\lambda(\cdot,\partial), D_\lambda, g_\lambda(\cdot,\partial),
T_\lambda, Q_0(\lambda,\partial), P(\lambda,\partial))$ of $A$ is the $\mathbb{C}[\partial]$-module
$A\oplus \mathbb{C}[\partial]x$ with the following $\lambda$-product
\begin{eqnarray}
&&(a,0)_\lambda (b,0)=(a_\lambda b,0),~~(0,x)_\lambda(0,x)=(Q_0(\lambda,\partial),P(\lambda,\partial)x),\\
&&(a,0)_\lambda(0,x)=(D_{\lambda}(a),h_{\lambda}(a,\partial)x),~~(0,x)_\lambda(a,0)
=(T_\lambda(a),g_\lambda(a,\partial)x),
\end{eqnarray}
for any $a$, $b\in V$. We denote this associative conformal algebra by $AC(A,\mathbb{C}[\partial]x|$ $(h_\lambda(\cdot,\partial), $ $D_\lambda,$ $g_\lambda(\cdot,\partial),$
$T_\lambda,$ $ Q_0(\lambda,\partial),$ $ P(\lambda,\partial))$.

\begin{theorem}\label{t5}
Let $A=\mathbb{C}[\partial]V$ be an associative conformal algebra and $Q=\mathbb{C}[\partial]x$ be a free $\mathbb{C}[\partial]$-module of rank 1. Set $E=A\oplus Q$ as a $\mathbb{C}[\partial]$-module. Then we have\\
(1) $CExtd(E,A)\cong\mathcal{AH}_A^2(Q,A)\cong \mathcal{FAC}(A)/\equiv$,
where $\equiv$ is the equivalence relation on the set
$\mathcal{FAC}(A)$ as follows:
$(h_\lambda(\cdot,\partial), D_\lambda, g_\lambda(\cdot,\partial),
T_\lambda, Q_0(\lambda,\partial), P(\lambda,\partial))\equiv (h_\lambda^{'}(\cdot,\partial),$ $D_\lambda^{'},$ $g_\lambda^{'}(\cdot,\partial), $ $T_\lambda^{'},$ $Q_0^{'}(\lambda,\partial),$ $P^{'}(\lambda,\partial)),$
if and only if
$h_\lambda(\cdot,\partial)=h_\lambda^{'}(\cdot,\partial)$, $g_\lambda(\cdot,\partial)=g_\lambda^{'}(\cdot,\partial)$ and there exist $T_0\in A$ and $\beta\in \mathbb{C}\backslash\{0\}$ such that for all $a\in A$:
\begin{gather}
\label{tt1}D_\lambda(a)=a_\lambda T_0+\beta D_\lambda^{'}(a)-h_\lambda(a,\partial)T_0,\\
T_\lambda(a)={T_0}_\lambda a+\beta T_\lambda^{'}(a)-g_\lambda(a,\partial)T_0,\\
P(\lambda,\partial)=h^{'}_\lambda(T_0,\partial)+g_\lambda^{'}(T_0,\partial)+\beta P^{'}(\lambda,\partial),\\
\label{tt2}Q_0(\lambda,\partial)={T_0}_\lambda T_0+\beta D_\lambda^{'}(T_0)
+\beta T_\lambda^{'}(T_0)+\beta^2Q_0^{'}(\lambda,\partial)-P(\lambda,\partial)T_0.
\end{gather}
The bijection between $\mathcal{FAC}(A)/\equiv$ and
$CExtd(E,A)$ is given by
$$\overline{(h_\lambda(\cdot,\partial), D_\lambda, g_\lambda(\cdot,\partial),
T_\lambda, Q_0(\lambda,\partial), P(\lambda,\partial))}\rightarrow$$
$$AC(A,\mathbb{C}[\partial]x|h_\lambda(\cdot,\partial), D_\lambda, g_\lambda(\cdot,\partial),
T_\lambda, Q_0(\lambda,\partial), P(\lambda,\partial)).$$
(2) $CExtd^{'}(E,A)\cong\mathcal{AH}^2(Q,A)\cong \mathcal{FAC}(A)/\approx$,
where $\approx$ is the equivalence relation on the set
$\mathcal{FAC}(A)$ as follows:
$(h_\lambda(\cdot,\partial), D_\lambda, g_\lambda(\cdot,\partial),
T_\lambda, Q_0(\lambda,\partial), P(\lambda,\partial))\approx (h_\lambda^{'}(\cdot,\partial),$ $D_\lambda^{'},$ $g_\lambda^{'}(\cdot,\partial), $$T_\lambda^{'},$ $Q_0^{'}(\lambda,\partial),$ $P^{'}(\lambda,\partial)),$ if and only if
$h_\lambda(\cdot,\partial)=h_\lambda^{'}(\cdot,\partial)$, $g_\lambda(\cdot,\partial)=g_\lambda^{'}(\cdot,\partial)$ and there exists $T_0\in A$  such that (\ref{tt1})-(\ref{tt2}) hold for $\beta=1$.
The bijection between $\mathcal{FAC}(A)/\approx$ and
$CExtd^{'}(E,A)$ is given by
$$\overline{\overline{(h_\lambda(\cdot,\partial), D_\lambda, g_\lambda(\cdot,\partial),
T_\lambda, Q_0(\lambda,\partial), P(\lambda,\partial))}}\rightarrow$$
$$AC(A,\mathbb{C}[\partial]x|h_\lambda(\cdot,\partial), D_\lambda, g_\lambda(\cdot,\partial),
T_\lambda, Q_0(\lambda,\partial), P(\lambda,\partial)).$$
\end{theorem}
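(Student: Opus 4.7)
The plan is to invoke Theorem~\ref{t4} and Proposition~\ref{pr1} to reduce everything to translating the equivalence relations $\equiv$ and $\approx$ from $\mathcal{TC}(A,Q)$ into the language of flag datums. By Theorem~\ref{t4} we already have $\text{CExtd}(E,A)\cong \mathcal{AH}_A^2(Q,A) = \mathcal{TC}(A,Q)/\equiv$ and likewise for $\approx$, while Proposition~\ref{pr1} gives a bijection $\Phi:\mathcal{TC}(A,Q)\to\mathcal{FAC}(A)$. So the only real work is to push the equivalence relations from $\mathcal{TC}(A,Q)$ through $\Phi$ and identify the result with the relations described by (\ref{tt1})--(\ref{tt2}).

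The first step is to parametrize the pair $(u,v)$ in Definition~\ref{DD1}. Since $Q=\mathbb{C}[\partial]x$ is free of rank one over $\mathbb{C}[\partial]$, every $\mathbb{C}[\partial]$-module automorphism $v$ of $Q$ has the form $v(x)=\beta x$ for a unique $\beta\in\mathbb{C}\setminus\{0\}$, and every $\mathbb{C}[\partial]$-module map $u:Q\to A$ has the form $u(x)=T_0$ for a unique $T_0\in A$. In the case $\approx$, $v=\text{Id}$ forces $\beta=1$, which is exactly the condition appearing in part~(2).

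The second and main step is a direct computation: substitute these parametrizations of $u$ and $v$ into each of (\ref{g1})--(\ref{g6}), together with the flag-datum encodings $a\rightharpoonup_\lambda x=h_\lambda(a,\partial)x$, $a\leftharpoonup_\lambda x=D_\lambda(a)$, $x\lhd_\lambda a=g_\lambda(a,\partial)x$, $x\rhd_\lambda a=T_\lambda(a)$, $x\circ_\lambda x=P(\lambda,\partial)x$, $f_\lambda(x,x)=Q_0(\lambda,\partial)$. Equation (\ref{g1}) collapses to $h_\lambda(a,\partial)=h_\lambda^{'}(a,\partial)$ because the factor of $\beta$ inside is cancelled by $v^{-1}$; similarly (\ref{g3}) yields $g_\lambda(a,\partial)=g_\lambda^{'}(a,\partial)$. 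Equations (\ref{g2}) and (\ref{g4}) then translate verbatim into the formulas for $D_\lambda$ and $T_\lambda$ in (\ref{tt1}), using that $u$ is $\mathbb{C}[\partial]$-linear so $u(h_\lambda(a,\partial)x)=h_\lambda(a,\partial)T_0$ and $u(g_\lambda(a,\partial)x)=g_\lambda(a,\partial)T_0$. Finally, evaluating (\ref{g5}) and (\ref{g6}) on $(x,x)$ produces the formulas for $P(\lambda,\partial)$ and $Q_0(\lambda,\partial)$ in (\ref{tt2}), where the mixed terms $h_\lambda'(T_0,\partial)$, $g_\lambda'(T_0,\partial)$, $D_\lambda'(T_0)$, $T_\lambda'(T_0)$ arise naturally from the products $T_0\rightharpoonup_\lambda'\beta x$, $\beta x\lhd_\lambda' T_0$, $T_0\leftharpoonup_\lambda'\beta x$, $\beta x\rhd_\lambda' T_0$.

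The only subtlety I anticipate is bookkeeping: checking that the relation defined by (\ref{tt1})--(\ref{tt2}) on $\mathcal{FAC}(A)$ really is an equivalence relation (reflexive from $(T_0,\beta)=(0,1)$, and closed under composition of the pairs $(u,v)$), and verifying that the forced conditions $h_\lambda=h_\lambda^{'}$, $g_\lambda=g_\lambda^{'}$ are consistent with, rather than weaker than, the full content of (\ref{g1})--(\ref{g6}). Both checks follow once the computation above is done, but the composition rule for consecutive pairs $(T_0,\beta)$ and $(T_0',\beta')$ has to be written down explicitly to confirm transitivity. Once these are in place, the bijection $\Phi$ descends to bijections $\mathcal{FAC}(A)/\equiv\,\to\,\mathcal{AH}_A^2(Q,A)$ and $\mathcal{FAC}(A)/\approx\,\to\,\mathcal{AH}^2(Q,A)$, and composing with Theorem~\ref{t4} yields the two stated isomorphisms with the explicit representatives $AC(A,\mathbb{C}[\partial]x|\cdots)$.
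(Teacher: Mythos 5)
Your proposal is correct and follows essentially the same route as the paper: the paper's proof simply sets $u(x)=T_0$ and $v(x)=\beta x$ and then cites Lemma \ref{l1}, Theorem \ref{t4} and Proposition \ref{pr1}, which is exactly your reduction, with your substitution of the flag-datum encodings into (\ref{g1})--(\ref{g6}) being the computation the paper leaves implicit. The extra checks you flag (transitivity via composition of pairs $(T_0,\beta)$, and $\beta=1$ in the cohomologous case) are already guaranteed because the relation is pulled back along a bijection from the relation on $\mathcal{TC}(A,Q)$ of Definition \ref{DD1}, which is an equivalence relation by Lemma \ref{l1}.
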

\begin{proof}
Since in Lemma \ref{l1}, $u: Q\rightarrow A$ is a $\mathbb{C}[\partial]$-module
homomorphism and $v\in \text{Aut}_{\mathbb{C}[\partial]}(Q)$,
we set $u(x)=T_0$ and $v(x)=\beta x$ where $T_0\in A$ and $\beta\in \mathbb{C}\backslash\{0\}$. Then this theorem can be directly obtained from Lemma \ref{l1}, Theorem \ref{t4} and
Proposition \ref{pr1}.
\end{proof}

Finally, we use this theory to characterize the extending structures of associative conformal algebras which is free and of rank 1 as a $\mathbb{C}[\partial]$-module by $Q=\mathbb{C}[\partial]x$.

\begin{proposition}\label{hh2}
Let $A=\mathbb{C}[\partial]e$ be the associative conformal algebra with $e_\lambda e=0$ and $Q=\mathbb{C}[\partial]x$. Then $\mathcal{AH}_{A}^2(Q,A)$ can be described by the following five kinds of flag datums:\\
(1) $(0,0,0,0,Q(\lambda,\partial)e,0)$,
where $Q(\lambda,\partial)\in \mathbb{C}[\lambda,\partial]$. Moreover, $(0,0,0,0,Q(\lambda,\partial)e,0)$ is equivalent to $(0,0,0,0,Q^{'}(\lambda,\partial)e,0)$ if and only if there exists $\beta\in \mathbb{C}\setminus\{0\}$ such that
$Q(\lambda,\partial)=\beta Q^{'}(\lambda,\partial)$;\\
(2) $(0,0,0,0,0,1)$;\\
(3) $(0,0,0,T_\lambda^1,0,1)$ where $T_\lambda^1(e)= e$;\\
(4)  $(0,D_\lambda^1,0,0,0,1)$ where $D_\lambda^1(e)= e$;\\
(5) $(0,D_\lambda^1,0,T_\lambda^1,0,1)$ where $D_\lambda^1(e)=T_\lambda^1(e)= e$.
\end{proposition}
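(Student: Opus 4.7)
The plan is to invoke Theorem \ref{t5} to reduce the classification to enumerating $\mathcal{FAC}(A)/\equiv$. Since $A=\mathbb{C}[\partial]e$ is free of rank one with generator $e$, I will parametrize a flag datum by six scalar-valued polynomials $h(\lambda,\partial)$, $g(\lambda,\partial)$, $D(\lambda,\partial)$, $T(\lambda,\partial)$, $Q(\lambda,\partial)$, and $P(\lambda,\partial)$ defined via $h_\lambda(e,\partial)=h(\lambda,\partial)$, $D_\lambda(e)=D(\lambda,\partial)e$, etc. The triviality $e_\lambda e=0$ collapses most mixed terms in the axioms (\ref{f1})--(\ref{f14}). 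My first move is to exploit (\ref{f2}) and (\ref{f6}), which reduce to $h(\mu,\lambda+\partial)\,h(\lambda,\partial)=0$ and $g(\lambda,-\lambda-\mu)\,g(\lambda+\mu,\partial)=0$; as $\mathbb{C}[\lambda,\mu,\partial]$ is an integral domain this immediately forces $h\equiv 0$ and $g\equiv 0$, whereupon many of the remaining terms drop out.

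With $h=g=0$ in place, the next step is to constrain $P$ via (\ref{f14}), which becomes $P(\mu,\lambda+\partial)\,P(\lambda,\partial)=P(\lambda,-\lambda-\mu)\,P(\lambda+\mu,\partial)$. Comparing $\partial$-degrees (the left side doubles the $\partial$-degree while the right side preserves it) forces $P$ to be $\partial$-independent, and the residual identity $P(\mu)P(\lambda)=P(\lambda)P(\lambda+\mu)$ then gives $P\equiv c$ for some $c\in\mathbb{C}$. Substituting this into (\ref{f7}) and (\ref{f11}) yields the functional equations $cD(\lambda,\partial)=D(\lambda,-\lambda-\mu)D(\lambda+\mu,\partial)$ and $T(\mu,\partial+\lambda)T(\lambda,\partial)=c\,T(\lambda+\mu,\partial)$; analogous degree arguments together with evaluations at $\lambda=0$ and $\partial=0$ will force $D,T$ to be constants lying in $\{0,c\}$, after which the cross-axiom (\ref{f9}) is automatic for all four combinations. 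When $c=0$ both $D$ and $T$ must vanish, giving the one-parameter family of case (1); when $c\neq 0$ the scaling $P=\beta P'$ built into the equivalence lets me take $\beta=c$ to normalize $P'=1$, and the four subcases $(D',T')\in\{0,1\}^{2}$ correspond to cases (2)--(5).

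It remains to classify $Q$. For each of the five cases I will substitute the fixed $(D,T,P)$ into (\ref{f13}) and solve the resulting cocycle equation. In case (1) the equation is vacuous, so $Q$ is arbitrary, and (\ref{tt2}) collapses to $Q=\beta^{2}Q'$, producing exactly the $\mathbb{C}^{*}$-scaling classification stated (squaring being surjective on $\mathbb{C}^{*}$). In cases (2)--(5) the normalization $P=1$ forces $\beta=1$, so the $Q$-part of the equivalence becomes $Q-Q'=q(-\lambda)D+q(\partial+\lambda)T-q(\partial)$ for $q\in\mathbb{C}[\partial]$. The solutions of (\ref{f13}) that I expect to find are: $Q$ independent of $\lambda$ in case (2); $Q(\lambda,\partial)=R(\lambda+\partial)-R(\partial)$ in case (3); $Q(\lambda,\partial)=B(\partial)-B(-\lambda)$ in case (4); and $Q(\lambda,\partial)=G(\lambda+\partial)+G(-\lambda)-G(\partial)+c_{0}$ in case (5). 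In every instance the solution set coincides precisely with the image of the coboundary map above, and the respective choices $q=-Q$, $q=R$, $q=-B$, or $q=G+c_{0}$ reduce $Q$ to $0$.

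The main technical obstacle is case (5): its cocycle equation is the most intricate. My plan there is to differentiate (\ref{f13}) in $\partial$, which converts it to exactly the case-(3) cocycle equation, now for $Q_{2}:=\partial_{\partial}Q$; solving yields $Q_{2}=R(\lambda+\partial)-R(\partial)$, and integrating in $\partial$ produces $Q=G(\lambda+\partial)-G(\partial)+H(\lambda)$ for an antiderivative $G$ of $R$; substituting back into (\ref{f13}) then pins down $H(\lambda)=G(-\lambda)+c_{0}$. For inequivalence of the five classes, the relation $P=\beta P'$ separates case (1) from the remaining four, and within cases (2)--(5) the common normalization $P=P'=1$ forces $\beta=1$, after which $D$ and $T$ are $\equiv$-invariants distinguishing the four subcases in $\{0,1\}^{2}$.
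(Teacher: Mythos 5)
Your proposal is correct, but it proceeds by a genuinely different route from the paper. The paper's proof leans on structure theory: it invokes the Wedderburn principal theorem for finite associative conformal algebras (Zelmanov), writing $E=S\oplus R$ with $R$ the maximal nilpotent ideal and $S$ semisimple, and then splits into the two cases ``$E$ nilpotent'' (forcing $P=0$, $D_\lambda=T_\lambda=0$ via (\ref{f7}) and (\ref{f11}), with $Q_0$ arbitrary --- your case (1)) and ``$S\cong \text{Cur}_1$ with $R=A$'', where Wedderburn lets one shift $x$ by an element of $A$ so that $\mathbb{C}[\partial]x$ becomes a copy of $\text{Cur}_1$, giving $P=1$ and $Q_0=0$ up to equivalence for free; the four remaining classes are then read off from the four $Q$-bimodule structures on the trivial algebra $A$. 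You instead solve the polynomial functional equations (\ref{f1})--(\ref{f14}) directly: the degree arguments forcing $h=g=0$, $P$ constant, and $D,T\in\{0,P\}$ all check out, and your explicit solutions of the cocycle equation (\ref{f13}) in each case (constants in $\lambda$; $R(\lambda+\partial)-R(\partial)$; $B(\partial)-B(-\lambda)$; $G(\lambda+\partial)+G(-\lambda)-G(\partial)+c_0$) do coincide exactly with the coboundaries $q(-\lambda)D'+q(\lambda+\partial)T'-q(\partial)$ coming from (\ref{tt2}) with $\beta=1$, so $Q_0$ is always normalizable to $0$ when $P\neq 0$; the differentiate-in-$\partial$ trick for case (5) works and the substitution back in pins down $H(\lambda)=G(-\lambda)+c_0$ as you predict. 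The trade-off: the paper's argument is shorter and conceptually transparent but imports a nontrivial external theorem, while yours is elementary and self-contained (and in effect re-proves, for this rank-two situation, the vanishing of the relevant second cohomology that Wedderburn supplies), at the cost of four explicit cocycle computations. Both correctly establish the pairwise inequivalence of the five classes via the invariance of $(h,g)$, the scaling $P=\beta P'$, and $D=\beta D'$, $T=\beta T'$.
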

\begin{proof}
By Theorem \ref{t5}, for characterizing the extending structures of  $A$ by $Q$ up to equivalence, we only need to describe the set $\mathcal{FAC}(A)$ up to equivalence.

Since $A$ is a trivial associative conformal algebra of rank 1, $E=A\natural Q$ is an associative conformal algebra of rank 2 as a free $\mathbb{C}[\partial]$-module with a nilpotent element. Note that there is an analogue of the Wedderburn Theorem in \cite{Z2}  that  an arbitrary finite associative conformal algebra $C$ can be  presented as $C=S\oplus R$, where
$R$ is the maximal nilpotent ideal of $C$ and $S$ is a semisimple subalgebra isomorphic to $C/R$. Therefore, according to the
Wedderburn Theorem, $E$ must have a nonzero maximal nilpotent ideal. By Remark \ref{rr1}, $(Q,\rightharpoonup_\lambda,
\lhd_\lambda)$ is an $A$-bimodule. Since $A$ is trivial, $\rightharpoonup_\lambda$ and $\lhd_\lambda$ are trivial.
Therefore, $h_\lambda(e,\partial)=g_\lambda(e,\partial)=0$. Next, we will give a discussion about whether $E$ has a nonzero semisimple part.

If there is a nonzero semisimple part in $E$, then it is $\text{Cur}_1$ and the maximal nilpotent ideal is $A$.
By the Wedderburn Theorem, $Q$ may be shifted by $A$ to get $\text{Cur}_1$. So, by Theorem \ref{t5}, $P(\lambda,\partial)=1$ and $Q_0(\lambda,\partial)=0$ up to equivalence and $(A,\leftharpoonup_\lambda,\rhd_\lambda)$ is a bimodule over
$Q=\mathbb{C}[\partial]x$, where $x\circ_\lambda x=x$. According to that $A$ is a bimodule over
$Q=\mathbb{C}[\partial]x$ through the actions of $T_\lambda$ and $D_\lambda$, it is easy to see that there are four cases, i.e. $T_\lambda(e)=D_\lambda(e)=0$, $T_\lambda(e)=e$, $D_\lambda(e)=0$,
$T_\lambda(e)=0$, $D_\lambda(e)=e$, and $T_\lambda(e)=D_\lambda(e)=e$. It is easy to see that the first case is just Case (2), the second case is just  Case (3), the third case is just Case (4) and the fourth case is just Case (5).

If there is no nonzero semisimple part in $E$, then $E$ is nilpotent. Therefore, $P(\lambda,\partial)=0$.
By (\ref{f7}) and (\ref{f11}), we can easily get that $D_\lambda(e)=T_\lambda(e)=0$.
 Then,  for any $Q_0(\lambda,\partial)=Q(\lambda,\partial)e\in A[\lambda]$,
(\ref{f1})-(\ref{f14}) hold.  Therefore, in this case, all flag datums are of the form $(0,0,0,0,Q(\lambda,\partial)e,0)$,
where $Q(\lambda,\partial)\in \mathbb{C}[\lambda,\partial]$. By Theorem \ref{t5}, $(0,0,0,0,Q(\lambda,\partial)e,0)$ is equivalent to $(0,0,0,0,Q^{'}(\lambda,\partial)e,0)$ if and only if there exists $\beta\in \mathbb{C}\setminus\{0\}$ such that
$Q(\lambda,\partial)=\beta^2 Q^{'}(\lambda,\partial)$. Note that $\mathbb{C}$ is a perfect field. Therefore, this is Case (1).

Obviously, by Theorem \ref{t5}, the five kinds of flag datums are not equivalent to each other. 
\end{proof}

\begin{proposition}\label{hh3}
Let $A=\mathbb{C}[\partial]e$ be the associative conformal algebra with $e_\lambda e=e$ and $Q=\mathbb{C}[\partial]x$. Then $\mathcal{AH}_{A}^2(Q,A)$ can be described by the following seven kinds of flag datums:\\
(1) $(0,0,0,0,0,0)$;\\
(2) $(0,0,0,0,0,1)$;\\
(3) $(h_\lambda^1(\cdot,\partial),0,0,0,0,0)$ where $h_\lambda^1(e,\partial)= 1$;\\
(4)  $(0,0,g_\lambda^1(\cdot,\partial),0,0,0)$ where $g_\lambda^1(e,\partial)= 1$;\\
(5) $(h_\lambda^1(\cdot,\partial),0,g_\lambda^1(\cdot,\partial),0,0,0)$ where $h_\lambda^1(e,\partial)=g_\lambda^1(e,\partial)= 1$; \\ (6)$(h_\lambda^1(\cdot,\partial),0,g_\lambda^1(\cdot,\partial),0,e,0)$  where $h_\lambda^1(e,\partial)=g_\lambda^1(e,\partial)= 1$;\\
(7)$(h_\lambda^1(\cdot,\partial),0,g_\lambda^1(\cdot,\partial),0,ce,1)$ for any $c\in \mathbb{C}$,  where $h_\lambda^1(e,\partial)=g_\lambda^1(e,\partial)= 1$.
\end{proposition}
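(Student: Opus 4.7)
The plan is to adapt the structural approach of Proposition \ref{hh2} via the Wedderburn-type theorem of Zelmanov \cite{Z2} combined with the equivalence of Theorem \ref{t5}. Since $E = A\natural Q$ is a finite associative conformal algebra of rank $2$, it decomposes as $E = S \oplus R$, with $R$ the maximal nilpotent ideal and $S$ a semisimple complement. Semisimplicity of $A = \text{Cur}_1$ forces $A \cap R = 0$, so $A$ embeds into $S$ and $\text{rank}(S) \in \{1, 2\}$.

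First I would handle the rank-$1$ case, where $S = A$ and $R$ is a rank-$1$ nilpotent ideal with $R_\lambda R = 0$ by Proposition \ref{ppp2}. For a generator $y$ of $R$ I would write $e_\lambda y = \phi(\lambda,\partial) y$ and $y_\lambda e = \psi(\lambda,\partial) y$; associativity yields functional equations for $\phi$ and $\psi$ matching (\ref{f2}) and (\ref{f6}), and the same solvability argument used in Proposition \ref{hh2} forces $\phi, \psi \in \{0, 1\}$. Taking $x = y$ then produces the four flag datums (1), (3), (4), (5). Next I would address the rank-$2$ case, where $R = 0$ and $E \cong \text{Cur}_1 \oplus \text{Cur}_1 = \mathbb{C}[\partial]e_1 \oplus \mathbb{C}[\partial]e_2$ by the classification of finite simple associative conformal algebras; the condition $e_\lambda e = e$ forces the embedding to take the form $e \mapsto \alpha_0 e_1 + \beta_0 e_2$ with $(\alpha_0, \beta_0) \in \{(1,0), (0,1), (1,1)\}$. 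Up to the symmetry exchanging the two factors, this gives either $e = e_1$ (which, after normalizing the complement by an equivalence using Theorem \ref{t5}, yields item (2)) or the diagonal embedding $e = e_1 + e_2$.

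The diagonal sub-case is where the computation becomes delicate. I would parametrize every $\mathbb{C}[\partial]$-complement of $A$ as $x = \mathrm{a}(\partial) e_1 + \mathrm{b}(\partial) e_2$ with $\mathrm{a} - \mathrm{b} = c_0 \in \mathbb{C}^\times$ (required for $\{e, x\}$ to be a $\mathbb{C}[\partial]$-basis of $E$), compute the six components of the flag datum ($h = g = 1$ together with explicit polynomial expressions in $\mathrm{a}$, $\mathrm{b}$, and $c_0$ for $D$, $T$, $P$, $Q_0$), and then apply the equivalence with $T_0 = \gamma(\partial)e$, $\beta \in \mathbb{C}^\times$ — corresponding to the substitution $x \mapsto \beta x + \gamma(\partial)e$ and hence to $(\mathrm{a}, \mathrm{b}, c_0) \mapsto (\beta\mathrm{a}+\gamma, \beta\mathrm{b}+\gamma, \beta c_0)$ — to choose $\gamma$ canceling the nonconstant part of $\mathrm{b}$ and reduce to $D = T = 0$. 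The remaining data are then constants $(P, Q_0)$ that still transform under the residual action of $(T_0, \beta) = (Ce, \beta)$. The main obstacle is this residual orbit analysis: one must carefully track how $P$ and $Q_0$ move under the action given by formulas (\ref{tt1})--(\ref{tt2}), separate the orbits according to whether $P = 0$ or $P = 1$ in order to identify items (6) and (7), and confirm pairwise inequivalence of items (1)--(7) using $(h, g)$ as the invariant that rules out identifications across the strata coming from Case I versus Case II.
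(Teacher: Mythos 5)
Your overall strategy is the same as the paper's: invoke the Wedderburn decomposition $E=S\oplus R$, use Proposition \ref{ppp2} to get $R_\lambda R=0$ when $R$ has rank one (yielding items (1), (3), (4), (5)), and realize the semisimple case explicitly inside $\text{Cur}_1\oplus\text{Cur}_1$ (yielding item (2) from the embedding $e\mapsto e_1$ and the diagonal sub-case otherwise); your treatment of the diagonal case by direct parametrization of complements is more concrete than the paper's, which instead solves the compatibility equations (\ref{f1})--(\ref{f14}). The gap sits exactly at the step you defer as ``the main obstacle'': the residual orbit analysis in the stratum $h=g=1$, $D=T=0$ is not carried out, and the outcome you predict for it is wrong. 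A residual transformation there has $T_0=Ce$ with $C\in\mathbb{C}$ arbitrary, and (\ref{tt1})--(\ref{tt2}) give $P\mapsto 2C+\beta P'$ and $Q_0\mapsto(\beta^2c'-C^2-\beta CP')e$; in particular $P=0$ versus $P=1$ is \emph{not} an orbit invariant in this stratum, so ``separating the orbits according to whether $P=0$ or $P=1$'' cannot identify items (6) and (7). Note also that your normalization of the diagonal case always produces $(1,0,1,0,0,d)$ with $Q_0=0$ and $P=d\neq 0$, and all of these, for $d\in\mathbb{C}^\times$, lie in a single orbit.

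If you complete the computation you will find that this single orbit contains item (6) and item (7) for every $c\neq -\tfrac14$: in the algebra of item (6), where $e_\lambda x=x_\lambda e=x$ and $x_\lambda x=e$, the complement generated by $y=\tfrac12(x+e)$ (i.e.\ $\beta=\tfrac12$, $T_0=\tfrac12 e$) satisfies $e_\lambda y=y_\lambda e=y$ and $y_\lambda y=y$, which is item (7) with $c=0$. Likewise, in the algebra of item (5) the complement generated by $x+\tfrac12 e$ gives $y_\lambda y=-\tfrac14 e+y$, which is item (7) with $c=-\tfrac14$. So the $h=g=1$ stratum consists of exactly two equivalence classes, not the family (5), (6), (7)$_{c\in\mathbb{C}}$, and your method, carried out correctly, cannot terminate at the stated list of seven pairwise inequivalent kinds. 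This is as much a defect of the target statement (and of the paper's own proof, whose Subcase 1 never checks which $c$ actually yield a semisimple $E$, and whose final claim of pairwise inequivalence fails) as of your sketch; but as written, your proposal neither performs the decisive orbit computation nor would reproduce the proposition if it did.
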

\begin{proof}
Similar to that in Proposition \ref{hh2}, we only need to describe the set $\mathcal{FAC}(A)$ up to equivalence.

Since $(Q,\rightharpoonup_\lambda,\lhd_\lambda)$ is an $A$-bimodule, it is easy to get that
$e\rightharpoonup_\lambda x=x$ or $e\rightharpoonup_\lambda x=0$, and $x\lhd_\lambda e=x$ or $x\lhd_\lambda e=0$.
Therefore, $h_\lambda(e,\partial)=g_\lambda(e,\partial)=0$, or $h_\lambda(e,\partial)=1$, $g_\lambda(e,\partial)=0$,
or $h_\lambda(e,\partial)=0$, $g_\lambda(e,\partial)=1$, or $h_\lambda(e,\partial)=g_\lambda(e,\partial)=1$.

Next, by the Wedderburn Theorem, we will give a discussion about whether $E$ has a nonzero maximal nilpotent ideal.

If $E$ has a nonzero maximal nilpotent ideal, then by Theorem \ref{t5}, we can make $Q$ be the nilpotent ideal. Therefore, in this case,
$P(\lambda,\partial)=0$,  $Q_0(\lambda,\partial)=0$ up to equivalence and $(A,\leftharpoonup_\lambda,\rhd_\lambda)$ is a bimodule over
$Q=\mathbb{C}[\partial]x$, where $x\circ_\lambda x=0$. Since $Q$ is a trivial associative conformal algebra,
$\leftharpoonup_\lambda$ and $\rhd_\lambda$ are trivial. Therefore, $T_\lambda(e)=D_\lambda(e)=0$.
Thus, according to the discussion about $h_\lambda(\cdot,\partial)$ and $g_\lambda(\cdot,\partial)$,
we can get Case (1), Case (3), Case (4) and Case (5).

If $E$ has no nonzero maximal nilpotent ideal, then $E$ is semisimple by the Wedderburn Theorem. Therefore,
as an associative conformal algebra, $E$ is isomorphic to $\text{Cur}_1\oplus \text{Cur}_1$.
Then we discuss it in two cases, i.e. $P(\lambda,\partial)\neq 0$ and $P(\lambda,\partial)=0$.\\
{\bf Subcase 1}: $P(\lambda,\partial)\neq 0$. According to $E\cong \text{Cur}_1\oplus \text{Cur}_1$ and
by Theorem \ref{t5}, we can get $P(\lambda,\partial)=1$ up to equivalence. Set $T_\lambda(e)=T(\lambda,\partial)e$,
and $D_\lambda(e)=D(\lambda,\partial)e$. Setting $a=b=e$ in (\ref{f5}) and (\ref{f12}),
we can get
\begin{eqnarray}
T(\lambda,\partial)=T(\lambda,-\lambda-\mu)+g_\lambda(e,-\lambda-\mu)T(\lambda+\mu,\partial),\\
T(\mu,\lambda+\partial)g_\lambda(e,\partial)+g_\mu(e,\lambda+\partial)=g_{\lambda+\mu}(e,\partial).
\end{eqnarray}
Therefore, we can directly obtain that when $g_\lambda(e,\partial)=0$, $T(\lambda,\partial)=T(\lambda)$ for
some $T(\lambda)\in \mathbb{C}[\lambda]$; when $g_\lambda(e,\partial)=1$, $T(\lambda,\partial)=0$.
Similarly, by (\ref{f1}) and (\ref{f8}), we can get that when $h_\lambda(e,\partial)=0$, $D(\lambda,\partial)=D(\lambda+\partial)$ for
some $D(\lambda)\in \mathbb{C}[\lambda]$; when $h_\lambda(e,\partial)=1$, $D(\lambda,\partial)=0$.
According to (\ref{f3}) and (\ref{f10}), one can obtain that
when $h_\lambda(e,\partial)=1$ and $g_\lambda(e,\partial)=0$,
$T_\lambda(e)=e$ and $D_\lambda(e)=0$;
when $h_\lambda(e,\partial)=0$ and $g_\lambda(e,\partial)=1$,
$T_\lambda(e)=0$ and $D_\lambda(e)=e$; when $h_\lambda(e,\partial)=g_\lambda(e,\partial)=0$,
$T_\lambda(e)=p(\lambda)e$ and $D_\lambda(e)=p(-\lambda-\partial)e$ for some $p(\lambda)\in \mathbb{C}[\lambda]$; when $h_\lambda(e,\partial)=g_\lambda(e,\partial)=1$,
$T_\lambda(e)=D_\lambda(e)=0$. By (\ref{f9}), the first two cases can not hold. When $h_\lambda(e,\partial)=g_\lambda(e,\partial)=0$, by (\ref{f7}), one can get
$Q_0(\lambda,\partial)=(p(\lambda)p(-\lambda-\partial)-p(-\partial))e$. Therefore, in this case,
all flag datums are of the form $(0,D_\lambda^p,0,T_\lambda^p,Q^{p}(\lambda,\partial) e,1)$,
where $D_\lambda^p(e)=p(-\lambda-\partial) e$, $T_\lambda^p(e)=p(\lambda) e$ and $Q^{p}(\lambda,\partial)=p(\lambda)p(-\lambda-\partial)- p(-\partial)$ for some $p(\lambda)\in\mathbb{C}[\lambda]$. By Theorem \ref{t5}, it is easy to get that $(0,D_\lambda^p,0,T_\lambda^p,Q^p(\lambda,\partial) e,1)$ is equivalent to $(0,0,0,0,0,1)$ with letting $T_0=p(-\partial)e$ and $\beta=1$  in  Theorem \ref{t5}. This is just Case (2). When $h_\lambda(e,\partial)=g_\lambda(e,\partial)=1$, according to $T_\lambda=D_\lambda=0$, we can directly obtain
from (\ref{f9}) and (\ref{f13}) that $Q_0(\lambda,\partial)=ce$ for some $c\in \mathbb{C}$. This is Case (7).\\
{\bf Subcase 2}: $P(\lambda,\partial)=0$. Then $Q_0(\lambda,\partial)\neq 0$. By Theorem \ref{t5} and according to that $E$ is isomorphic to $\text{Cur}_1\oplus \text{Cur}_1$, it is easy to see that we can make $Q_0(\lambda,\partial)=e$ up to equivalence. By (\ref{f14}), we can get $h_\lambda(e,\partial)=g_\lambda(e,\partial)$. According to that
$E$ is isomorphic to $\text{Cur}_1\oplus \text{Cur}_1$, one can get $h_\lambda(e,\partial)=g_\lambda(e,\partial)=1$. By
(\ref{f8}) and (\ref{f12}), it is easy to obtain that $D_\lambda=T_\lambda=0$. Therefore, this is Case (6).

Obviously, by Theorem \ref{t5}, the seven kinds of flag datums are not equivalent to each other. Therefore, we finish the proof.
\end{proof}

{\bf Acknowledgments}
{We wish to thank the referee for careful reading and useful comments. In particular, we would like to thank the referee
for suggesting the improvements of the proofs of Propositions \ref{hh2} and \ref{hh3} by using the structure theory of finite associative conformal algebras.}

\end{document}